\documentclass[12pt]{article}

\usepackage[T1]{fontenc}
\usepackage[english]{babel}
\usepackage[a4paper,margin=0.8in]{geometry}
\usepackage{amsmath,amssymb,amsthm,mathtools,bm}
\usepackage{booktabs,array,tabularx}
\usepackage{microtype}
\usepackage{hyperref}
\usepackage{enumitem}
\usepackage{cleveref}
\usepackage{caption}
\usepackage{float}
\usepackage{tikz}
\usepackage{graphicx}
\usetikzlibrary{positioning,fit,calc,decorations.pathreplacing,backgrounds}
\allowdisplaybreaks[1]
\hypersetup{
  hidelinks,
  pdftitle={Covering powers of cycles with equivalence subgraphs},
  pdfauthor={Zhanping Yang},
  pdfsubject={Equivalence coverings of powers of cycles},
  pdfkeywords={equivalence covering, powers of cycles, clique partitions, binary encoding, matrix rank, squares and cubes}
}

\newtheorem{theorem}{Theorem}[section]
\newtheorem{lemma}[theorem]{Lemma}
\newtheorem{conjecture}[theorem]{Conjecture}

\newtheorem{proposition}[theorem]{Proposition}

\newtheorem{claim}[theorem]{Claim}

\theoremstyle{remark}
\newtheorem{remark}[theorem]{Remark}

\newcommand{\eqc}{\operatorname{eq}}
\newcommand{\rank}{\operatorname{rank}}

\newcommand{\ceil}[1]{\left\lceil #1\right\rceil}

\newcommand{\R}{\mathbb R}

\newcommand{\cyclecover}[1]{%
	\begin{tikzpicture}[
		dot/.style={circle,draw=black,fill=white,
			inner sep=0pt,minimum size=4.6pt,line width=0.55pt},
		used/.style={dot,fill=black},
		every node/.style={font=\small}
		]
		\def\R{1.52}
		\foreach \i in {1,...,12}{
			\pgfmathsetmacro{\ang}{135-30*\i}
			\coordinate (v\i) at (\ang:\R);
		}
		\draw[gray!45,densely dashed,line width=0.5pt]
		(0,0) circle (\R);
		
		\draw[gray!80,line width=0.85pt,
		rotate around={-30:(0.643,1.113)}]
		(0.643,1.113) ellipse[x radius=1.42cm,y radius=0.48cm];
		\draw[gray!80,line width=0.85pt,
		rotate around={30:(0.643,-1.113)}]
		(0.643,-1.113) ellipse[x radius=1.42cm,y radius=0.48cm];
		\draw[gray!80,line width=0.85pt,
		rotate around={90:(-1.286,0)}]
		(-1.286,0) ellipse[x radius=1.42cm,y radius=0.48cm];
		
		\ifcase#1\relax
		\foreach \a/\b in {
			1/2,1/3,1/4,2/3,2/4,3/4,
			5/6,5/7,5/8,6/7,6/8,7/8,
			9/10,9/11,9/12,10/11,10/12,11/12}{
			\draw[line width=0.55pt] (v\a)--(v\b);
		}
		\or
		\foreach \a/\b in {
			3/4,5/6,3/5,3/6,4/5,4/6,
			7/8,9/10,7/9,7/10,8/9,8/10,
			11/12,1/2,11/1,11/2,12/1,12/2}{
			\draw[line width=0.55pt] (v\a)--(v\b);
		}
		\or
		\foreach \a/\b in {2/5,4/7,6/9,8/11,10/1,12/3}{
			\draw[line width=0.55pt] (v\a)--(v\b);
		}
		\fi
		
		\foreach \i in {1,...,12}{\node[used] at (v\i) {};}
		
		\node at (60:2.12) {$B_0$};
		\node at (-60:2.12) {$B_1$};
		\node at (180:2.10) {$B_2$};
		
		\ifcase#1\relax
		\node[font=\scriptsize] at (106:1.10) {$v_1$};
		\node[font=\scriptsize] at (12:1.12) {$v_4$};
		\node[font=\scriptsize] at (-12:1.12) {$v_5$};
		\node[font=\scriptsize] at (-106:1.10) {$v_8$};
		\node[font=\scriptsize] at (-134:1.08) {$v_9$};
		\node[font=\scriptsize] at (134:1.08) {$v_{12}$};
		\node at (0,-2.50) {(a) Group edges};
		\or
		\node at (0,-2.50) {(b) $\ell=1$};
		\or
		\node at (0,-2.50) {(c) $\ell=0$};
		\fi
	\end{tikzpicture}%
}

\crefname{theorem}{theorem}{theorems}
\crefname{lemma}{lemma}{lemmas}
\crefname{conjecture}{conjecture}{conjectures}
\crefname{corollary}{corollary}{corollaries}
\crefname{proposition}{proposition}{propositions}
\crefname{remark}{remark}{remarks}

\title{Covering powers of cycles with equivalence subgraphs}
\author{
  Zhanping Yang\thanks{Corresponding author. Center for Discrete Mathematics, Fuzhou University, Fuzhou, Fujian 350108, China. Research supported by National Key R\&D Program of China (Grant No. 2023YFA1010202) and National Natural Science Foundation of China (Grant No. 12371342). Email: \texttt{yangzp@fzu.edu.cn}.}
}
\date{}

\begin{document}
\maketitle

\begin{abstract}
Let $C_n^k$ denote the $k$th power of the cycle $C_n$. The equivalence covering number $\operatorname{eq}(G)$,
studied by Alon, is the minimum number of equivalence subgraphs needed to cover all edges of $G$. King conjectured that $\operatorname{eq}(C_n^k)=\Omega(k)$ for sufficiently large $n$ relative to $k$. We study $\operatorname{eq}(C_n^k)$ and prove that, for every $n \geq 2k + 2$, we have $1 + \lceil\log_2(k+1)\rceil \leq \operatorname{eq}(C_n^k) \leq 1 + 2\lceil\log_2(k+1)\rceil$. We further show that, whenever $n\ge(k+1)\max\{41,5\lceil\log_2(k+1)\rceil\}$,
\begin{equation*}
\operatorname{eq}(C_n^k)\in\left\{1+\lceil\log_2(k+1)\rceil,\,2+\lceil\log_2(k+1)\rceil\right\}.
\end{equation*}
Moreover, for every integer $n\ge \max\{2k+2,k^2(k+2)\}$, the following exact values hold in the indicated cases:
\begin{equation*}
	\operatorname{eq}(C_n^k)=\begin{cases}
		1+\lceil\log_2(k+1)\rceil,
		& k+1\neq2^{\lceil\log_2(k+1)\rceil}
		\text{ or }(k+1)\mid n,\\[3pt]
		2+\lceil\log_2(k+1)\rceil,
		& k+1=2^{\lceil\log_2(k+1)\rceil}
		\text{ and }2\nmid n.
	\end{cases}
\end{equation*}
\end{abstract}

\noindent\textbf{2020 Mathematics Subject Classification.} 05C15, 05C70.

\noindent\textbf{Keywords.} equivalence covering, powers of cycles, clique partitions

\section{Introduction}\label{sec:intro}
For a graph $G$, an \emph{equivalence subgraph} is a disjoint union of cliques, with isolated vertices allowed. The \emph{equivalence covering number} $\eqc(G)$ is the minimum number of equivalence subgraphs of $G$ whose edge sets cover $E(G)$. Alon~\cite{Alo1986} established bounds for the equivalence covering number of complements of graphs with bounded degree. More precisely, if $H$ is an $n$-vertex graph with minimum degree at least $1$ and maximum degree at most $d$, then
\begin{equation*}
\log_2\frac{n}{d}\le\eqc(\overline{H})\le2e^2(d+1)^2\ln n.
\end{equation*}
In particular, for every fixed $d$, we have $\eqc(\overline{H})=\Theta(\log n)$.

This parameter is closely related to the classical problem of covering edges by cliques. Erd\H{o}s, Goodman, and P\'osa~\cite{EGP1966} proved that every $n$-vertex graph admits
an edge partition into at most $\lfloor n^2/4\rfloor$ cliques, using only edges and triangles. This bound is best possible, as shown by the balanced complete bipartite graph. Lov\'asz~\cite{Lov1968} refined the clique-covering bound in terms of the number of edges. More precisely, if $G$ has $m$ edges and $t$ is the largest integer satisfying $t(t-1)\le \binom{n}{2}-m$, then $G$ can be covered by at most $\binom{n}{2}-m+t$ cliques. Orlin~\cite{Orl1977} studied coverings by complete and complete bipartite graphs, while Roberts~\cite{Rob1985} investigated applications of edge coverings by cliques.

Several further results concern clique coverings and clique partitions. Pullman~\cite{Pul1984} studied algorithmic questions for clique coverings, while Pullman, Shank, and Wallis~\cite{PSW1982} investigated partitions into maximal cliques. Erd\H{o}s, Faudree, and Ordman~\cite{EFO1988} showed that the clique partition number of an $n$-vertex graph can be larger than its clique covering number by a factor of order $n^2$. De Caen, Erd\H{o}s, Pullman, and Wormald~\cite{DEP1986} proved that the maximum product of the clique covering numbers of an $n$-vertex graph and its complement is asymptotic to $n^4/256$. In another direction, Erd\H{o}s and Pyber~\cite{EP1997} proved that the edge set of every $n$-vertex graph can be partitioned into complete bipartite graphs so that each vertex belongs to at most $O(n/\log n)$ of them. These results highlight the differences between covering and partitioning edges by various classes of subgraphs.

For particular graph classes, de Caen, Gregory, and Pullman~\cite{DGP1985} obtained logarithmic bounds for clique covers of complements of paths and cycles. Klotz~\cite{Klo1989} investigated connections between clique covers and graph coloring, and Prisner~\cite{Pri1995} studied clique covers and partitions in generalizations of line graphs.

For the equivalence covering number, Esperet, Gimbel, and King~\cite{EGK2010} established the following bounds for the line graph $L(G)$ of a graph $G$:
\begin{equation*}
\frac{1}{3}\log_2\log_2\chi(G)< \eqc(L(G))\le 2\log_2\log_2\chi(G)+2,
\end{equation*}
where $\chi(G)$ denotes the chromatic number of $G$. In particular, they showed that the equivalence covering number of line graphs is unbounded even when the underlying graphs are triangle-free. They also proved that deciding whether $\eqc(L(G))\le3$ is NP-complete for triangle-free graphs $G$.

Golumbic, Morgenstern, and Rajendraprasad~\cite{GMR2018} obtained bounds on equivalence coverings for certain edge-intersection graphs of paths in a grid. A related formulation uses \emph{tessellations}, which are partitions of the vertex set into cliques; Abreu et al.~\cite{ACF2022} studied an extension called total tessellation covering, with additional vertex-coloring conditions.

We study the equivalence covering number of powers of cycles. The $k$th power $C_n^k$ has vertex set $V(C_n)$, with two distinct vertices adjacent whenever their distance in $C_n$ is at most $k$. King~\cite{Kin2011} proposed linear growth of the equivalence covering number of $C_n^k$ in $k$, even when $n$ is large compared with $k$. The question was recorded in the following form.
\begin{conjecture}[\textbf{King}~\cite{Kin2011}]\label{conj:king}
Given $k$ and $n$, the graph $C_n^k$ has equivalence covering number $\Omega(k)$.
\end{conjecture}
Our first result shows that, for \(n \ge 2k+2\), the growth of \(\eqc(C_n^k)\) in \(k\) is logarithmic rather than linear.
\begin{theorem}\label{thm:main}
	Let $n\ge3$ and $k\ge1$ be integers. If $n\geq 2k+2$, then
	\begin{equation*}
	1+\ceil{\log_2(k+1)}\leq\eqc(C_n^k)\le1+2\ceil{\log_2(k+1)}.
	\end{equation*}
	In particular, if $n\leq 2k+1$, then $\eqc(C_n^k)=1$.
\end{theorem}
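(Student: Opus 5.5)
For the final sentence of the statement, I would note that when $n\le 2k+1$ any two vertices of $C_n$ are at distance at most $\lfloor n/2\rfloor\le k$. So $C_n^k=K_n$, and a single clique gives $\eqc=1$. From now on $n\ge 2k+2$. For the lower bound I would use the standard labelling reformulation. A cover by $t$ equivalence subgraphs $E_1,\dots,E_t$ assigns to each vertex $v$ a vector $(x_1(v),\dots,x_t(v))$, where $x_c(v)$ names the clique of $E_c$ containing $v$ (isolated vertices get fresh labels). Two vertices that are adjacent agree in at least one coordinate, and two that are nonadjacent disagree in every coordinate.

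For the lower bound I would work in the window $S=\{0,\dots,k\}$, $T=\{k+1,\dots,2k+1\}$. Since $n\ge 2k+2$, these vertices are distinct, and vertex $i\in S$ is adjacent to $k+1+j\in T$ iff $j<i$. In particular, $i$ and $k+1+i$ are nonadjacent, so they differ in every coordinate. The inequality $1+\ceil{\log_2(k+1)}\le t$ is equivalent to $k+1\le 2^{t-1}$, and I would prove this by induction on $t$ via a halving argument. Look at coordinate $1$ and the clique $Q$ of $E_1$ containing vertex $k$. Among the half-graph edges $\{i,k+1+j\}$ with $j<i$, those covered by $E_1$ use one label per clique. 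Because $E_1$ is a disjoint union of cliques while $i\not\sim k+1+i$, the edges covered by coordinate $1$ form a "threshold" structure that splits the index range into a lower part and an upper part. The edges inside one part, between $S$ and $T$, must then be covered by the remaining $t-1$ coordinates. Hence one part has size at least $(k+1)/2$ and carries a smaller half-graph covered by $t-1$ classes. The extra $+1$ comes from the base case: the within-$S$ clique edges, together with the cross edges, cannot all be handled by a single class when $k\ge1$. This step is where I expect the main difficulty. I need to pin down precisely which half survives and why its induced structure is again a window of the same type, which requires using the cyclic adjacency rather than just the bipartite half graph.

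For the upper bound I would build an explicit binary encoding. Cut the cycle into consecutive blocks $B_0,B_1,\dots$ of length $k+1$, and write the positions inside a block as $0,\dots,k$ in binary with $m=\ceil{\log_2(k+1)}$ bits. The first equivalence subgraph takes each block as a clique. Every remaining edge joins position $a$ of some $B_r$ to position $b<a$ of $B_{r+1}$; these are exactly the pairs at distance $k+1-a+b\le k$. For such a pair, let $\ell$ be the highest bit where $a$ and $b$ differ; then $a$ has a $1$ in bit $\ell$ and $b$ has a $0$ there. For a bit $\ell$ and a common higher-bit prefix $p$, take the set
\[
\{a\in B_r:\ a \text{ has prefix } p \text{ and bit } \ell \text{ equal to } 1\}\cup\{b\in B_{r+1}:\ b \text{ has prefix } p \text{ and bit } \ell \text{ equal to } 0\}.
\]
This set is a clique, since any such $b$ is smaller than any such $a$, so they are within distance $k$. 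Within block $B_r$, the vertices with bit $\ell=1$ are used for $B_{r+1}$ and those with bit $\ell=0$ for $B_{r-1}$. Therefore the sets for a fixed $\ell$ over all $r$ are disjoint, and one equivalence subgraph per bit suffices when $(k+1)\mid n$.

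When $(k+1)\nmid n$, one block must be shorter, and the clean ordering breaks at the two boundaries adjacent to it. To repair this I would use a second family of $m$ bit-classes, built the same way from a shifted block decomposition, for instance one starting inside the short block. This second family covers the cross edges near the defect, and the two families together give $1+2m$ classes. The check that remains is that the shifted family really covers every edge missed by the first, including edges that jump over a short block, using $n\ge 2k+2$ so that no edge spans more than two consecutive blocks in at least one of the two decompositions.
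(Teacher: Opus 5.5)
\textbf{The lower bound has a genuine gap.} Your divisible-case construction is exactly the paper's (Lemma~\ref{lem:comparison} and Theorem~\ref{thm:main-divisible}). The lower bound fails in two places.

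First, you claim that the cross edges covered by one class ``split the index range'', so that a window of size at least $(k+1)/2$ is untouched by that class. This does not hold for an arbitrary class. Write $(i,j)$ for the pair $v_i,v_{k+1+j}$. Each clique of $E_1$ contributes only a rectangle $I\times J$ with $J<I$, and different cliques can interleave. For example, take $k=4$, $n$ large, and let $E_1$ have nontrivial cliques $\{v_4,v_7,v_8\}$, $\{v_1,v_3,v_5\}$, $\{v_2,v_6\}$. Then $E_1$ covers exactly the cross pairs $(4,2),(4,3),(3,0),(1,0),(2,1)$. A case check on $i_2\in\{2,3,4\}$ shows that no sub-window $i_0\le j_0<i_1\le j_1<i_2\le j_2$ with three rows avoids all of these among its pairs $(i_a,j_b)$, $b<a$.

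Second, even granting such a window, the extra $+1$ cannot come from the base case. Your induction hypothesis needs the edges inside $S$ and inside $T$ of the surviving window to be covered by the other $t-1$ classes. They may be covered only by $E_1$: here $v_1v_3$ and $v_7v_8$, or take $E_1$ to be the block partition with $S$ a block. What survives the deletion of $E_1$ is a bare half graph. A bare half graph with $2^t$ rows \emph{can} be covered by $t$ classes keeping every diagonal pair apart, by Lemma~\ref{lem:comparison}. So this induction gives at best $t\ge\ceil{\log_2(k+1)}$. Also, your ``iff'' for $S$--$T$ adjacency fails when $n\le 3k+1$; for instance, $v_0v_{2k+1}$ is an edge when $n=2k+2$.

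\textbf{The missing idea} is to use one window of length $2k+2$ with an algebraic count. Set $\varphi(u,v)=\prod_{s=1}^t(c_s(u)-c_s(v))$ and $M=\bigl(\varphi(v_i,v_{j+k+1})\bigr)_{0\le i,j\le 2k+1}$. Below the diagonal the two vertices are equal or at distance at most $k$, so $M$ vanishes there; this is where the clique edges enter. On the diagonal the distance is exactly $k+1$, so the entries are nonzero. Hence $\rank M=2k+2$. Expanding the product factors $M$ through the $2^t$ monomials $\prod_{s\in R}c_s$, $R\subseteq[t]$, so $2k+2\le 2^t$.

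\textbf{The non-divisible upper bound is also incomplete}, and your suggested shift does not work as stated. Take $k=3$, $n=13$, with short block $\{v_{12}\}$. The decomposition $\{v_{12},v_0,v_1,v_2\},\{v_3,\dots,v_6\},\{v_7,\dots,v_{10}\},\{v_{11}\}$ starts inside the short block. It leaves the edges $v_{12}v_1$ and $v_{11}v_0$ covered by none of your $1+2\ceil{\log_2(k+1)}$ classes.

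The paper instead treats the short block as a truncated block. Since $P_n^k$ is an induced subgraph of $C_{q(k+1)}^k$ for large $q$, $1+\ceil{\log_2(k+1)}$ classes cover $E(P_n^k)$. The remaining wrap-around edges are exactly $A_aB_b$ with $a>b$, where $A_a=v_{n-k-1+a}$ and $B_b=v_b$. One more application of Lemma~\ref{lem:comparison} covers them.
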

We next determine the exact value of $\eqc(C_n^k)$ for certain values of $k$ and $n$.
\begin{theorem}\label{thm:exact-values}
Let $k\ge1$ be an integer and $c_k=\max\{2k+2,k^2(k+2)\}$. For every integer $n\ge c_k$, the following exact values hold in the indicated cases:
	\begin{equation*}
	\eqc(C_n^k)=\begin{cases}
	1+\lceil\log_2(k+1)\rceil,
	& k+1\neq2^{\lceil\log_2(k+1)\rceil}
	\text{ or }(k+1)\mid n,\\[3pt]
	2+\lceil\log_2(k+1)\rceil,
	& k+1=2^{\lceil\log_2(k+1)\rceil}
	\text{ and }2\nmid n.
\end{cases}
	\end{equation*}
\end{theorem}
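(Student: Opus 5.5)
Throughout, write $m=\lceil\log_2(k+1)\rceil$. Since $c_k\ge 2k+2$, \Cref{thm:main} already gives $\eqc(C_n^k)\ge 1+m$. So the proof has three parts: a construction with $1+m$ classes in the first case; a construction with $2+m$ classes in the second case; and the matching lower bound $\eqc(C_n^k)\ge 2+m$ when $k+1=2^m$ and $n$ is odd.

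\emph{Block construction.} Cut the cycle into consecutive intervals (blocks) $B_0,\dots,B_{r-1}$, each of size $k$ or $k+1$. Since every block has at most $k+1$ vertices, each block is a clique of $C_n^k$. Class $0$ consists of these cliques, so it covers all edges inside blocks. Every remaining edge joins two consecutive blocks $B_i$ and $B_{i+1}$.

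Give each vertex a label in $\{0,\dots,2^m-1\}$ according to its position in its block, with labels increasing along the block. When both blocks have size $k+1$ and labels are positions, $x\in B_i$ and $y\in B_{i+1}$ are adjacent exactly when $\mathrm{lab}(y)<\mathrm{lab}(x)$. For such a pair, let $\ell$ be the highest bit where the labels differ, so $x$ has bit $1$ and $y$ has bit $0$ there. In class $\ell+1$, take the clique made of:
\begin{itemize}[nosep]
\item the vertices of $B_i$ whose labels agree with $\mathrm{lab}(x)$ above bit $\ell$ and have bit $\ell$ equal to $1$;
\item the vertices of $B_{i+1}$ whose labels have the same prefix and bit $\ell$ equal to $0$.
\end{itemize}
Every pair in this set is adjacent, because each $1$-side label exceeds each $0$-side label. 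The cliques in class $\ell+1$ are pairwise disjoint: a vertex with bit $\ell$ equal to $1$ is only ever grouped with the block to its left, and a vertex with bit $\ell$ equal to $0$ only with the block to its right. Hence $1+m$ classes suffice whenever $(k+1)\mid n$.

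\emph{Case $k+1<2^m$.} Here some labels are unused, and I would use these gaps to allow blocks of size $k$. A block of size $k$ receives labels skipping one value, chosen so that the rule "adjacent iff $\mathrm{lab}(y)<\mathrm{lab}(x)$" still describes exactly the cross edges of $C_n^k$ between consecutive blocks of sizes $(k+1,k)$, $(k,k+1)$ and $(k,k)$. One can check that labelling $B_{i+1}$ with a shift against $B_i$ realises the distance condition, provided the spare value $2^m-1\ge k+1$ is available. Because $n\ge k^2(k+2)\ge k(k-1)$, the Frobenius bound lets us write $n=ak+b(k+1)$ with $a,b\ge0$. This gives the upper bound $1+m$ in the first case.

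\emph{Case $k+1=2^m$ and $n$ odd.} For the upper bound I would use the same block decomposition with blocks of sizes $k$ and $k+1$. The single extra class repairs the one place where labels cannot shift, by covering the offending cross edges with a matching-type equivalence subgraph.

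\emph{Lower bound (the main obstacle).} Suppose $k+1=2^m$ and a cover by $1+m$ classes exists. The plan is to rerun the counting behind the lower bound of \Cref{thm:main} and check that every inequality in it must be tight. Concretely:
\begin{enumerate}[nosep]
\item Show that some class must consist of cliques of size exactly $k+1$, which are then intervals.
\item Show that the remaining $m$ classes must realise a bijective binary encoding of the positions $\{0,\dots,k\}$ inside each clique.
\item Propagate this rigid structure around the cycle. I expect it to force an alternation between consecutive cliques (which side is "$1$" at the top bit), equivalently a proper $2$-colouring of the cyclic sequence of cliques, or blocks of size exactly $k+1$. Either conclusion is incompatible with $n$ odd.
\end{enumerate}
The delicate steps are proving that the extremal cover is this rigid, and handling boundary effects; the largeness $n\ge k^2(k+2)$ is what I would use to find long stretches where the structure stabilises.
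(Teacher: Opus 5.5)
Your treatment of the case $(k+1)\mid n$ is the paper's construction (Theorem~\ref{thm:main-divisible}). Two other steps have genuine gaps: the construction for $k+1<2^m$, and the lower bound for odd $n$.

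\emph{Case $k+1<2^m$.} The claim that shifted labels work for any decomposition $n=ak+b(k+1)$ is false. Suppose the blocks of size $k+1$ carry contiguous labels $\{\sigma,\dots,\sigma+k\}$. The interleaving conditions then force the offset $\sigma$ to stay the same from one size-$(k+1)$ block to the next, and to drop strictly across every run of size-$k$ blocks. The offset can therefore never return to its starting value after one turn, so no block of size $k$ can be used at all.

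The size-$(k+1)$ blocks must also skip labels, and even then there is a quantitative obstruction. Give a vertex of $B_i$ with label $\lambda$ the coordinate $g=i\cdot2^m+\lambda$ modulo $N\cdot2^m$, where $N=a+b$. The labels are forced to increase along each block, so your rule says exactly that $g$ is an order-preserving induced embedding of $C_n^k$ into $C_{N2^m}^{2^m-1}$. Hence $g(v_{j+k})-g(v_j)\le 2^m-1$ (forward difference) for every $j$. Summing over $j$ gives $kN2^m\le(2^m-1)n$, i.e.\ $ka\le(2^m-1-k)b$.
\begin{itemize}
\item For $k=2$ and $n=20=4\cdot2+4\cdot3$ this condition fails, so no arrangement of these blocks works.
\item Even when the ratio is fine, the size-$k$ blocks must be spread out: for $k=2$, two consecutive blocks of size $2$ are already impossible.
\end{itemize}
So the Frobenius bound is beside the point. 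What you need is exactly the hypothesis of Lemma~\ref{lem:embedding} for $C_{N2^m}^{2^m-1}$ with $N=\lceil n/(k+1)\rceil$. This is where $n\ge k^2(k+2)$ genuinely enters (Theorem~\ref{thm:eq=1}), and the floor-function embedding also produces the balanced arrangement automatically.

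Your upper bound for $k+1=2^m$ is vague: there is one defect per size-$k$ block, not one overall. It can be repaired. Label every block by position and keep the size-$k$ blocks pairwise non-consecutive, which is possible once $n\ge k(2k+1)$. The only uncovered cross edges are then the pairs formed by position $a$ of a size-$k$ block and position $a$ of the next block, and these form a matching. This is a legitimate elementary alternative to Proposition~\ref{prop:quadratic}.

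\emph{Lower bound for odd $n$.} This is the core of the theorem, and your plan contains no mechanism for it. Tightness of $2k+2\le 2^t$ in Lemma~\ref{lem:lower} only says that certain matrices have full rank. It carries no information about clique sizes in any particular class.

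Moreover, step~1 alone would give $(k+1)\mid n$ whenever $\eqc(C_n^k)=1+m$. That would also settle the case $k+1=2^m$, $n$ even, $(k+1)\nmid n$, which the theorem leaves open. So you are aiming at a far stronger rigidity than is needed or known. The size of $n$ is also irrelevant here: the bound holds for every odd $n\ge 2k+2$.

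The missing idea is to work over an infinite field $\mathbb F$ of characteristic $2$.
\begin{itemize}
\item There $\prod_j(c_j(u)+c_j(v))=B(x_u,x_v)$, where $x_v=\bigl(\prod_{j\in S}c_j(v)\bigr)_{S\subseteq[t]}\in\mathbb F^{2^t}$ and $B$ is the nondegenerate polar form of $Q(x)=\sum_{1\in S}x_Sx_{[t]\setminus S}$.
\item $Q(x_v)=2^{t-1}\prod_j c_j(v)=0$, because $2^{t-1}=k+1$ is even.
\item Each window $W_i=\operatorname{span}\{x_i,\dots,x_{i+k}\}$ is therefore a $(k+1)$-dimensional totally singular subspace of a $2(k+1)$-dimensional quadratic space.
\item Consecutive windows meet in dimension exactly $k$, because $B(x_i,x_{i+k+1})\ne0$.
\item For such maximal totally singular subspaces, $\dim(W_0\cap W_{i+1})-\dim(W_0\cap W_i)=\pm1$.
\end{itemize}
Going once around the cycle flips the parity of $\dim(W_0\cap W_i)$ exactly $n$ times and returns to $W_n=W_0$, so $n$ must be even.
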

We further show that the bounds differ by at most $1$ whenever $n=\Omega(k\log_2 k)$.
\begin{theorem}\label{thm:main-gap}
	For integers $k\ge1$ and $n\ge(k+1)\max\{41,5\lceil\log_2(k+1)\rceil\}$, we have
	\begin{equation*}
		\operatorname{eq}(C_n^k)\in\left\{1+\ceil{\log_2(k+1)}, 2 +\ceil{\log_2(k+1)}\right\}.
	\end{equation*}
\end{theorem}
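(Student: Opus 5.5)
The lower bound $1+\ceil{\log_2(k+1)}$ is already supplied by \cref{thm:main}, since $n\ge 41(k+1)\ge 2k+2$. So the whole task is the upper bound: for $n\ge(k+1)\max\{41,5\ceil{\log_2(k+1)}\}$ I would construct a covering of $C_n^k$ by $m+2$ equivalence subgraphs, where $m=\ceil{\log_2(k+1)}$.

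\textbf{Model construction.} When $(k+1)\mid n$, cut the cycle into consecutive blocks $B_0,\dots,B_{q-1}$ of size $k+1$.
\begin{itemize}
\item The first equivalence subgraph consists of the blocks themselves as cliques; it covers all edges inside a block.
\item Every remaining edge joins a vertex at position $a$ of $B_i$ to a vertex at position $b$ of $B_{i+1}$ with $b<a$. Write positions in binary with $m$ bits. Since $b<a$, there is a bit $\ell$ where $a$ has $1$ and $b$ has $0$, and the two agree on all higher bits.
\item For each $\ell$, form cliques from the vertices of $B_i$ whose bit $\ell$ is $1$ together with the vertices of $B_{i+1}$ whose bit $\ell$ is $0$ and whose higher bits agree with theirs. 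One has to check that these sets are cliques, i.e.\ that all the distances involved are at most $k$, and that the cliques for a fixed $\ell$ are pairwise disjoint. Both are plausible because each vertex uses bit $\ell$ in one role per block pair.
\end{itemize}
This is presumably the construction behind the $1+m$ upper bound of \cref{thm:exact-values} in the divisible case. I would reuse it as a black box. Parity of $q$ matters here, because the cliques with index $i$ and $i+1$ overlap in $B_{i+1}$.

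\textbf{General $n$.} Write $n=q(k+1)+r$ with $0\le r\le k$, and use a mixture of block sizes $k+1$ and $k+2$ (or $k$). The condition $n\ge 41(k+1)$ and $n\ge 5m(k+1)$ guarantees enough blocks that the $r$ extra vertices can be absorbed into distinct blocks. I would spread them so that each $(k+2)$-block is separated from the next by enough standard blocks.
\begin{itemize}
\item A $(k+2)$-block is not a clique of $C_n^k$, since its end vertices are at distance $k+1$. I would split it into a $(k+1)$-clique plus one leftover vertex, or use a modified binary labelling with a repeated label.
\item The few edges broken by each irregular block, roughly $O(k)$ of them near each defect, are collected into one extra equivalence subgraph. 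This is the source of the $+1$.
\item The requirement that the repair cliques near different defects be vertex-disjoint is where the spacing matters. Each defect uses about $m$ blocks of room to reroute the bit-$\ell$ cliques, which explains the hypothesis $n\ge 5m(k+1)$.
\item The constant $41$ handles small $k$ and the parity or wrap-around of the block sequence. If $q$ is odd, the alternating pattern in the bit layers fails at the wrap-around. I would fix this with one locally reversed labelling, again charged to the extra subgraph.
\end{itemize}

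\textbf{Main obstacle.} The hard step is verifying that all edges near the defects and the wrap-around fit into a single additional equivalence subgraph. This requires specifying the local modification explicitly. I would try first to make each defect a block of size $k+2$ whose extra vertex $w$ is handled as follows: every edge from $w$ into the neighbouring blocks is covered by a clique in the extra layer, and the binary layers treat $w$ as a copy of position $k$. I would then check the case analysis of distances and disjointness, with defects spaced at least $5m$ blocks apart.
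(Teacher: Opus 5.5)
\textbf{Where the proposal stands.} Your lower bound is fine. The block construction for $(k+1)\mid n$ is exactly Theorem~\ref{thm:main-divisible}, and it works for every $q\ge2$. In layer $\ell$, a vertex of $B_{i+1}$ joins a component of the pair $(B_i,B_{i+1})$ only when its bit $\ell$ is $0$, and a component of $(B_{i+1},B_{i+2})$ only when that bit is $1$. So components never overlap and there is no parity or wrap-around issue. The parity obstruction in Theorem~\ref{thm:odd-power} concerns odd $n$, not the number of blocks.

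\textbf{The gap.} The genuine gap is the general-$n$ step, which is the entire content of the theorem. You assert that all defects can be repaired inside one extra equivalence subgraph, but you give no construction, and the mechanism you sketch fails as stated. Inserting an extra vertex $w$ into a block $D$ shifts the positions on one side of $w$ by one relative to the neighbouring block. Across that interface the adjacency condition becomes $a\ge b+2$ instead of $a>b$.

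Concretely, suppose $D$ is labelled $0,1,\dots,k,k$, or $w$ is simply left out of the bit layers. Then the vertex in position $a\in\{1,\dots,k\}$ of $D$ and the vertex with label $a-1$ in the next block are at distance exactly $k+1$. Yet the bit layer indexed by the lowest set bit of $a$ puts them in a common component. Moving the duplicated label elsewhere only redistributes these $k$ non-edges between the two interfaces of $D$.

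So near each defect the bit layers are not equivalence subgraphs of $C_n^k$ at all and must be redesigned. Each vertex removed from a component uncovers edges in possibly several layers, and the first layer can no longer contain $D$ as a clique. For all of this to fit into one extra layer, the uncovered neighbours of every vertex must be pairwise adjacent, and the resulting cliques must be disjoint across all defects. Nothing in the proposal addresses this. The quantitative claims ($O(k)$ broken edges, spacing of $5m$ blocks, the constant $41$ for parity) are not derived from any construction.

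\textbf{How the paper avoids local repairs.} Write $s=\lceil\log_2(k+1)\rceil$.
\begin{itemize}
\item Lemma~\ref{lem:embedding} shows that $C_n^k$ is an induced subgraph of $C_N^K$ whenever $kN\le Kn$ and $(k+1)N\ge(K+1)n$. This turns an arbitrary $n$ into a divisible instance at the price of a larger power. Used once (Proposition~\ref{prop:quadratic}), it gives $2+s$, but only for $n\ge2k^2$.
\item To reach $n=\Theta(k\log k)$, Lemma~\ref{lem:pair-lift} proves $\eqc(C_{2m}^{2p+1})\le1+\eqc(C_m^p)$. Each vertex is blown up into a pair, and a degree count shows the uncovered edges form a perfect matching.
\item Lemma~\ref{lem:reduction} embeds $C_n^k$ in such a $C_{2m}^{2p+1}$ with $p+1=\lceil (k+1)/2+k(k+1)/n\rceil$ and $m/(p+1)\ge n/(k+1)$.
\item Iterating this $s-2$ times costs one layer per halving. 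The hypothesis $n\ge5s(k+1)$ keeps the accumulated rounding small enough that the final power is at most $6$. The hypothesis $n\ge41(k+1)$ is what lets the last cycle embed into some $C_{8j}^7$, which needs $4$ layers.
\end{itemize}
The extra $+1$ is therefore rounding slack in the embeddings, not a repair layer. Either supply and verify an explicit defect gadget, or replace your general-$n$ step with this embedding-plus-halving argument.
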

The paper is organized as follows. Section \ref{sec:1.2} establishes the general logarithmic bounds for powers of cycles. Section \ref{sec:1.3} proves Theorem \ref{thm:exact-values} using induced subgraph embeddings and a lower bound for odd $n$. Section \ref{sec:1.4} improves the general upper bound and proves Theorem \ref{thm:main-gap}.
\section{Proof of Theorem~\ref{thm:main}} \label{sec:1.2}
In this section, we establish the general logarithmic bounds for $\eqc(C_n^k)$. We first prove a lower bound, then give a construction that gives an exact value when $(k+1)\mid n$ and an upper bound for general $n$. When preparing this paper, we notice that Ferudun~\cite{Fer2026} also obtained results similar to Lemmas~\ref{lem:lower} and~\ref{lem:comparison}. We present our proofs here.

Write the vertices of $C_n$ as $v_0,\ldots,v_{n-1}$, with subscripts
taken modulo $n$. When $n\le2k+1$, every pair of vertices has cyclic
distance at most $k$, so $C_n^k=K_n$ and $\eqc(C_n^k)=1$.
We therefore assume $n\ge2k+2$ throughout the remainder of this section.
\begin{lemma}\label{lem:monotonicity}
	If $G$ is an induced subgraph of $H$, then $\eqc(G)\le\eqc(H)$.
\end{lemma}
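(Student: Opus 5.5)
The plan is to restrict an optimal covering of $H$ to the vertex set of $G$, and check that the restrictions form a valid equivalence covering of $G$ with the same number of members.

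Let $t=\eqc(H)$ and fix equivalence subgraphs $E_1,\dots,E_t$ of $H$ whose edge sets cover $E(H)$. For each $i$, let $F_i$ be the subgraph of $G$ with vertex set $V(G)$ whose edges are those edges of $E_i$ having both endpoints in $V(G)$.

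First we show that each $F_i$ is an equivalence subgraph of $G$. Write $E_i$ as a disjoint union of cliques $Q_1,\dots,Q_r$. Then $F_i$ is the disjoint union of the vertex sets $Q_j\cap V(G)$. Each of these is a clique, because any two of its vertices are adjacent in $Q_j$. The sets are pairwise disjoint, and vertices of $V(G)$ lying in no $Q_j$ are isolated in $F_i$. So $F_i$ is a disjoint union of cliques.

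The edges of $F_i$ are edges of $H$ with both endpoints in $V(G)$. Since $G$ is induced, these are exactly edges of $G$, so $F_i\subseteq G$. This is the only place where inducedness is used.

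Next we check the covering property. Every edge $uv\in E(G)$ is an edge of $H$, so it lies in some $E_i$. Both $u$ and $v$ are in $V(G)$, so $uv\in F_i$. Hence $F_1,\dots,F_t$ cover $E(G)$, which gives $\eqc(G)\le t=\eqc(H)$.

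There is no real obstacle. The only point needing care is that without inducedness a clique of $E_i$ restricted to $V(G)$ could contain a non-edge of $G$.
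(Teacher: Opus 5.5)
Your proof is correct and follows the paper's approach: restrict each member $H_i$ of an optimal cover of $H$ to $V(G)$, use $G=H[V(G)]$ to see that $H_i[V(G)]$ is an equivalence subgraph of $G$, and note that these restrictions still cover $E(G)$. The only difference is that you spell out the clique-by-clique check, which the paper leaves implicit.
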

\begin{proof}[\bf Proof]
	Let $t=\operatorname{eq}(H)$, and let $H_1,\ldots,H_t$ be equivalence subgraphs of $H$ such that
	\begin{equation*}
		E(H)=\bigcup_{i=1}^t E(H_i).
	\end{equation*}
	Since $G=H[V(G)]$, each $H_i[V(G)]$ is an equivalence subgraph of $G$, and
	\begin{equation*}
		E(G)=\bigcup_{i=1}^t E(H_i[V(G)]).
	\end{equation*}
	Hence $\operatorname{eq}(G)\le t$.
\end{proof}
We next establish a lower bound for the equivalence covering number of powers of cycles.
\begin{lemma}\label{lem:lower}
	If $n\ge2k+2$, then
	$\eqc(C_n^k)\ge1+\ceil{\log_2(k+1)}$.
\end{lemma}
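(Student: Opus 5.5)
Suppose $C_n^k$ is covered by $t$ equivalence subgraphs $H_1,\ldots,H_t$, where $n\ge 2k+2$. The plan is to show $t\ge 1+\ceil{\log_2(k+1)}$ by combining a binary-encoding argument on a clique of size $k+1$ with one extra color forced by the cyclic structure.

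\emph{Encoding step.} Each $H_i$ partitions $V$ into cliques (isolated vertices counting as singleton classes). To each vertex $v$ assign the vector of its classes, $\phi(v)=(c_1(v),\ldots,c_t(v))$. An edge $uv$ is covered exactly when $u,v$ share a class in some coordinate. Consider two vertices $v_a,v_b$ at cyclic distance greater than $k$. No $H_i$ can contain both in one class, since a class is a clique of $C_n^k$. So $\phi(v_a)$ and $\phi(v_b)$ disagree in every coordinate.

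\emph{Main step.} Take the block $B=\{v_0,\ldots,v_k\}$, which is a clique of $C_n^k$. Its "antipodal'' partners $v_{k+1},\ldots,v_{2k+1}$ exist and are distinct from $B$, because $n\ge 2k+2$. The pair $v_j,v_{j+k+1}$ is at distance $k+1$, provided $n\ge 2k+2$ so that the distance the other way around, $n-k-1$, is also at least $k+1$. Hence $v_j$ and $v_{j+k+1}$ disagree in every coordinate.

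I would aim to prove that at least $1+\ceil{\log_2(k+1)}$ coordinates are needed. The cleanest route is a "two-class'' reduction: restrict attention to the $2k+2$ consecutive vertices $v_0,\ldots,v_{2k+1}$. By Lemma~\ref{lem:monotonicity}, applied to the induced subgraph, we may work with this induced path power.

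Fix one coordinate, say $H_1$. I would then show that among $v_0,\ldots,v_{2k+1}$ there is a set $S$ of $k+1$ vertices forming a clique, no two of which lie in a common class of $H_1$ along with their partners, and such that $H_2,\ldots,H_t$ must separate $S$. More precisely, I want: for each $j\in\{0,\ldots,k\}$, the edge $v_jv_{j+k}$ (distance $k$) and similar long edges force, via the remaining coordinates, distinct patterns on $k+1$ vertices.

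A more direct alternative is to show that in $H_2,\ldots,H_t$ the vectors of the $k+1$ vertices $v_0,\ldots,v_k$ can be compressed to $\{0,1\}^{t-1}$ with all distinct. Then $k+1\le 2^{t-1}$, which is exactly the bound.

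\emph{Binary compression idea.} For each coordinate $i$ and each $j\in\{0,\ldots,k\}$, set $x_i(j)=1$ if $v_j$ shares its $H_i$-class with some vertex of $\{v_{k+1},\ldots,v_{2k+1}\}$. Since every class is a clique, a class meeting both $B$ and $B'=\{v_{k+1},\ldots,v_{2k+1}\}$ consists of a suffix-type set on the left and a prefix-type set on the right.

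Now consider the edges $v_jv_{j+m}$ crossing between $B$ and $B'$ (those with $j+m\le j+k$). Covering all of them, including the edges $v_jv_{k+1}$ and $v_kv_{j+k}$, should force the map $j\mapsto(x_i(j))_i$ to be injective. The vectors are also forced to avoid the all-zero vector, because $v_0v_k$ ... rather, the condition I expect is that $v_j$ with $x(j)=x(j')$ for $j<j'$ leads to the crossing edge $v_{j'}v_{j+k+1}$ being uncovered. That uses the suffix/prefix structure of crossing classes.

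Finally, one coordinate is used up to separate the all-ones/all-zero situation, or to distinguish the wrap-around. This costs the extra $+1$ and should come from the cyclic closure: the edge set between $B'$ and $B$ through $v_{n-1},v_0$ also needs covering. Summing up gives $k+1\le 2^{t-1}$, so $t\ge 1+\ceil{\log_2(k+1)}$.

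The hard step is exactly the injectivity claim together with where the "$+1$'' comes from. I would first try to prove it by choosing the indicator relative to the specific threshold edges $v_jv_{j+k}$. I would check small cases ($k=1$: $C_n$ with $n\ge4$ needs $2$ matchings; $k=2,3$) to calibrate the definition of $x_i$ before writing the general argument.
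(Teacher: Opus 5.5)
Your encoding step and the observation that $v_j$ and $v_{j+k+1}$ disagree in every coordinate are the right ingredients; the paper starts the same way. But the proposal never proves the inequality, and the route you sketch does not work.

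\textbf{The injectivity step fails.} Your argument is that $x(j)=x(j')$ with $j<j'$ would leave the edge $v_{j'}v_{j+k+1}$ uncovered. This is false. Covering that edge in $H_i$ gives $x_i(j')=1$, but $x_i(j)=1$ can also hold, through a different class of $H_i$ containing $v_j$ and some $v_{k+1+b}$ with $b<j$. Concretely, take $C_{12}^3$ with vertices $v_0,\ldots,v_{11}$ and $B'=\{v_4,\ldots,v_7\}$. Let $H_1,\ldots,H_4$ have the following classes:
\begin{itemize}
\item $H_1$: $\{v_0,\ldots,v_3\},\{v_4,\ldots,v_7\},\{v_8,\ldots,v_{11}\}$;
\item $H_2$: $\{v_2,\ldots,v_5\},\{v_6,\ldots,v_9\},\{v_{10},v_{11},v_0,v_1\}$;
\item $H_3$: $\{v_1,v_4\},\{v_2,v_5\},\{v_3,v_6\},\{v_7,v_{10}\},\{v_9,v_0\}$;
\item $H_4$: $\{v_5,v_8\},\{v_{11},v_2\}$.
\end{itemize}
These cover every edge, yet $x(2)=x(3)=(0,1,1,0)$. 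So injectivity does not follow from the covering property, and your argument never uses that $t$ is small.

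\textbf{The $+1$ is unaccounted for.} Even an injective map from $k+1$ indices into $\{0,1\}^t$ would only give $t\ge\lceil\log_2(k+1)\rceil$. Your explanation of the extra $+1$ is also not right: it does not come from the wrap-around. The rank argument below works verbatim for the path power $P_N^k$ with $N\ge 3k+3$, which has no cyclic closure. Moreover, for $n\ge 3k+2$ your own restriction to $v_0,\ldots,v_{2k+1}$ has already discarded every wrap-around edge. Nor is there any reason for one coordinate of an arbitrary cover to be ``used up''.

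\textbf{The missing idea} is a rank bound that turns the antipodal pairs directly into $2k+2\le 2^t$. Set $\varphi(u,v)=\prod_{s=1}^t(c_s(u)-c_s(v))$; it vanishes exactly when $u=v$ or $uv\in E(C_n^k)$. The paper takes the $(2k+2)\times(2k+2)$ matrix $M=(\varphi(v_i,v_{j+k+1}))_{0\le i,j\le 2k+1}$.
\begin{itemize}
\item For $j<i$ we have $|k+1+j-i|\le k$, so those entries vanish.
\item The diagonal entries are nonzero, since $n\ge 2k+2$.
\item Hence $M$ is upper triangular with nonzero diagonal, and $\rank M=2k+2$.
\item Expanding the product over subsets $S\subseteq[t]$ factors $M=UV$ with inner dimension $2^t$, so $2k+2\le 2^t$.
\end{itemize}
The argument uses all $2k+2$ pairs $(v_i,v_{i+k+1})$, $0\le i\le 2k+1$, not only the $k+1$ with $i\le k$. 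This doubling is exactly where the $+1$ comes from. A combinatorial ``binary compression'' substitute for this step would need a genuinely new argument, which the proposal does not provide.
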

\begin{proof}[\bf Proof]
	Let $t=\operatorname{eq}(C_n^k)$, and let $G_1,\ldots,G_t$ be spanning equivalence subgraphs of $C_n^k$ such that
	\begin{equation*}
		E(C_n^k)=\bigcup_{s=1}^t E(G_s).
	\end{equation*}
	For each $s\in[t]$, assign distinct integers to the components of $G_s$, and let $c_s(v)$
	denote the integer assigned to the component containing $v$. For any distinct vertices $u,v$,
	\begin{equation*}
		\begin{cases}
			c_s(u)-c_s(v)=0\text{ for some }s\in[t],&uv\in E(C_n^k),\\
			c_s(u)-c_s(v)\ne0\text{ for every }s\in[t],&uv\notin E(C_n^k).
		\end{cases}
	\end{equation*}
	For $u,v\in V(C_n^k)$, define
	\begin{equation*}
		\varphi(u,v)=\prod_{s=1}^t(c_s(u)-c_s(v)).
	\end{equation*}
	Then if $u=v$ or $uv\in E(C_n^k)$, then $\varphi(u,v)=0$, and if $u,v$ are distinct and nonadjacent, then $\varphi(u,v)\ne0$. Consider the $(2k+2)\times(2k+2)$ matrix $M=\bigl(\varphi(v_i,v_{j+k+1})\bigr)_{0\le i,j\le 2k+1}$, where subscripts are taken modulo $n$.
	\begin{claim}\label{claim:diagonal}
		$M_{ii}\ne 0$ for every $0\le i\le 2k+1$.
	\end{claim}
	\begin{proof}
		Since $n\ge 2k+2$, the cyclic distance between $v_i$ and $v_{i+k+1}$ is $k+1$. Hence
		$v_iv_{i+k+1}\notin E(C_n^k)$, and thus $M_{ii}=\varphi(v_i,v_{i+k+1})\ne 0$.
	\end{proof}
	\begin{claim}\label{claim:triangular}
		$M_{ij}=0$ for every $0\le j<i\le 2k+1$.
	\end{claim}
	\begin{proof}
		Since $1\le i-j\le 2k+1$, we have $-k\le k+1+j-i\le k$. Hence $v_i=v_{j+k+1}$ or $v_iv_{j+k+1}\in E(C_n^k)$, and thus $M_{ij}=\varphi(v_i,v_{j+k+1})=0$.
	\end{proof}
	By Claims~\ref{claim:diagonal} and~\ref{claim:triangular}, $M$ is upper triangular with nonzero diagonal. Hence $\rank M=2k+2$. On the other hand, expanding the product gives
	\begin{equation*}
		M_{ij}=\sum_{S\subseteq[t]}(-1)^{t-|S|}\left(\prod_{s\in S}c_s(v_i)\right)\left(\prod_{s\in[t]\setminus S}c_s(v_{j+k+1})\right).
	\end{equation*}
	Let $U\in\mathbb R^{(2k+2)\times 2^t}$ and $V\in\mathbb R^{2^t\times(2k+2)}$ be matrices whose columns and rows, respectively, are indexed by the subsets $S\subseteq[t]$, with
	\begin{equation*}
		U_{i,S}=\prod_{s\in S}c_s(v_i),
		\qquad
		V_{S,j}=(-1)^{t-|S|}
		\prod_{s\in[t]\setminus S}c_s(v_{j+k+1}).
	\end{equation*}
	Then $M=UV$, and consequently, 
	\begin{equation*}
		2k+2=\operatorname{rank}M\le\min\{\operatorname{rank}U,\operatorname{rank}V\}\le 2^t.
	\end{equation*}
	Therefore, we have $t\ge 1+\lceil\log_2(k+1)\rceil$, as required.
\end{proof}
We next give a construction that will be used to establish the upper bounds.
\begin{lemma}\label{lem:comparison}
	Let $A=\{A_0,\ldots,A_k\}$ and $B=\{B_0,\ldots,B_k\}$ be vertex-disjoint cliques in a graph $G$. Suppose that $A_aB_b\in E(G)$ whenever $a>b$. Then the edges $A_aB_b$ with $a>b$ can be covered by $\lceil\log_2(k+1)\rceil$ equivalence subgraphs of $G$.
\end{lemma}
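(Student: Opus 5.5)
We encode indices in binary. Let $m=\lceil\log_2(k+1)\rceil$, so every index $a\in\{0,\ldots,k\}$ has a binary expansion $a=\sum_{r=0}^{m-1}a_r2^r$ with $a_r\in\{0,1\}$. For distinct $a,b$ let $r(a,b)$ be the most significant bit position where $a$ and $b$ differ. Then $a>b$ holds exactly when $a_{r}=1$ and $b_{r}=0$ at $r=r(a,b)$, while $a_s=b_s$ for all $s>r$. This suggests one equivalence subgraph $H_r$ for each bit position $r\in\{0,\ldots,m-1\}$, giving $m$ subgraphs in total, as the statement requires.

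For fixed $r$, we group vertices by their higher bits. For each bit string $p$ of length $m-1-r$ (the bits above position $r$), define the vertex set
\begin{equation*}
Q_{r,p}=\{A_a: a_{s}=p_s\ (s>r),\ a_r=1\}\cup\{B_b: b_{s}=p_s\ (s>r),\ b_r=0\}.
\end{equation*}
These sets are disjoint for distinct $p$, since each vertex has a unique prefix and each $A_a$ or $B_b$ appears at most once. We let $H_r$ be the union of the complete graphs on the sets $Q_{r,p}$, with all remaining vertices of $G$ isolated. Then $H_r$ is a disjoint union of cliques on vertex sets of $G$.

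The key step is to check that each $Q_{r,p}$ is really a clique of $G$. We verify this in three cases.
\begin{enumerate}[label=(\roman*)]
\item Two vertices of $A$ are adjacent because $A$ is a clique.
\item Two vertices of $B$ are adjacent because $B$ is a clique.
\item For $A_a,B_b\in Q_{r,p}$, the indices $a$ and $b$ agree above position $r$, and $a_r=1>0=b_r$. Hence $a>b$, so $A_aB_b\in E(G)$ by hypothesis.
\end{enumerate}
Thus each $H_r$ is an equivalence subgraph of $G$.

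Covering is then immediate. Given an edge $A_aB_b$ with $a>b$, take $r=r(a,b)$ and $p$ the common prefix of $a$ and $b$ above position $r$. Then $A_a,B_b\in Q_{r,p}$, so the edge lies in $H_r$. No step is a real obstacle; the only point needing care is the clique verification in case (iii), which works because, given agreement on all higher bits, the bit comparison at position $r$ forces $a>b$.
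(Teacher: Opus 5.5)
Your proposal is correct and is essentially the paper's proof. Your sets $Q_{r,p}$ (common prefix $p$ above bit $r$, with bit $r$ equal to $1$ for the $A$-vertices and $0$ for the $B$-vertices) are exactly the paper's $Q_{\ell,p}$, defined via $\lfloor a/2^\ell\rfloor=2p+1$ and $\lfloor b/2^\ell\rfloor=2p$. Your choice of the most significant differing bit matches the paper's choice of the largest $\ell$ with $\lfloor a/2^\ell\rfloor>\lfloor b/2^\ell\rfloor$.
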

\begin{proof}[\bf Proof]
	For $0\le \ell<\lceil\log_2(k+1)\rceil$ and $p\ge0$, define
	\begin{equation}\label{eq:Ilp1}
		I_{\ell,p}^1=\left\{a\in\{0,\ldots,k\}:\left\lfloor\frac{a}{2^\ell}\right\rfloor=2p+1\right\},
	\end{equation}
	and
	\begin{equation}\label{eq:Ilp0}
		I_{\ell,p}^0=\left\{b\in\{0,\ldots,k\}:\left\lfloor\frac{b}{2^\ell}\right\rfloor=2p\right\}.
	\end{equation}
	Let $Q_{\ell,p}=\{A_a:a\in I_{\ell,p}^1\}\cup\{B_b:b\in I_{\ell,p}^0\}$. For each fixed $\ell$, the nonempty sets $Q_{\ell,p}$ are pairwise vertex-disjoint. Moreover, if $a\in I_{\ell,p}^1$ and
	$b\in I_{\ell,p}^0$, then $a\ge (2p+1)2^\ell>b$. Since $A$ and $B$ are cliques, each $Q_{\ell,p}$ is a clique in $G$. Thus these cliques, together with the remaining vertices as singleton components, form an equivalence subgraph of $G$.
	
	Now let $0\le b<a\le k$, and choose the largest integer $\ell$ such that $\left\lfloor{a}/{2^\ell}\right\rfloor>\left\lfloor{b}/{2^\ell}\right\rfloor$. Clearly, $\ell<\lceil\log_2(k+1)\rceil$. By the maximality of $\ell$, we have
	\begin{equation*}
		\left\lfloor\frac{a}{2^{\ell+1}}\right\rfloor=\left\lfloor\frac{b}{2^{\ell+1}}\right\rfloor=p.
	\end{equation*}
	for some $p\ge0$. Thus $\left\lfloor{a}/{2^\ell}\right\rfloor,\left\lfloor{b}/{2^\ell}\right\rfloor\in\{2p,2p+1\}$. Consequently,
	\begin{equation*}
		\left\lfloor\frac{a}{2^\ell}\right\rfloor=2p+1,
		\qquad
		\left\lfloor\frac{b}{2^\ell}\right\rfloor=2p.
	\end{equation*}
	Hence $A_a,B_b\in Q_{\ell,p}$, so $A_aB_b$ is covered by the corresponding equivalence subgraph. Therefore all required edges can be covered by $\lceil\log_2(k+1)\rceil$ equivalence subgraphs.
\end{proof}
For $k=3$, the construction gives two equivalence subgraphs. The first has $Q_{1,0}=\{A_2,A_3,B_0,B_1\}$ as its only nontrivial component, while the second has $Q_{0,0}=\{A_1,B_0\}$ and $Q_{0,1}=\{A_3,B_2\}$ as its nontrivial components. These two equivalence subgraphs cover all edges $A_aB_b$ with $a>b$, as illustrated in Figure~\ref{fig:k3-cover}.
\begin{figure}[H]
	\centering
	
	\begin{minipage}[t]{0.32\textwidth}
		\centering
		\begin{tikzpicture}[
			x=0.95cm,y=0.78cm,
			dot/.style={circle,draw,fill=white,inner sep=0pt,minimum size=4.8pt},
			used/.style={circle,draw,fill=black,inner sep=0pt,minimum size=4.8pt},
			lab/.style={font=\small}
			]
			
			% vertices on side A
			\node[dot]  (A0) at (0,3) {};
			\node[used] (A1) at (0,2) {};
			\node[used] (A2) at (0,1) {};
			\node[used] (A3) at (0,0) {};
			
			% vertices on side B
			\node[used] (B0) at (2.8,3) {};
			\node[used] (B1) at (2.8,2) {};
			\node[used] (B2) at (2.8,1) {};
			\node[dot]  (B3) at (2.8,0) {};
			
			% labels
			\node[lab,left=3pt of A0] {$A_0$};
			\node[lab,left=3pt of A1] {$A_1$};
			\node[lab,left=3pt of A2] {$A_2$};
			\node[lab,left=3pt of A3] {$A_3$};
			
			\node[lab,right=3pt of B0] {$B_0$};
			\node[lab,right=3pt of B1] {$B_1$};
			\node[lab,right=3pt of B2] {$B_2$};
			\node[lab,right=3pt of B3] {$B_3$};
			
			% target edges
			\draw (A1)--(B0);
			\draw (A2)--(B0);
			\draw (A2)--(B1);
			\draw (A3)--(B0);
			\draw (A3)--(B1);
			\draw (A3)--(B2);
			
			\node[font=\footnotesize] at (1.4,-0.85) {$A_aB_b\ (a>b)$};
			
		\end{tikzpicture}
	\end{minipage}
	\hfill
	\begin{minipage}[t]{0.32\textwidth}
		\centering
		\begin{tikzpicture}[
			x=0.95cm,y=0.78cm,
			dot/.style={circle,draw,fill=white,inner sep=0pt,minimum size=4.8pt},
			used/.style={circle,draw,fill=black,inner sep=0pt,minimum size=4.8pt},
			lab/.style={font=\small}
			]
			
			% vertices on side A
			\node[dot]  (A0) at (0,3) {};
			\node[dot]  (A1) at (0,2) {};
			\node[used] (A2) at (0,1) {};
			\node[used] (A3) at (0,0) {};
			
			% vertices on side B
			\node[used] (B0) at (2.8,3) {};
			\node[used] (B1) at (2.8,2) {};
			\node[dot]  (B2) at (2.8,1) {};
			\node[dot]  (B3) at (2.8,0) {};
			
			% labels
			\node[lab,left=3pt of A0] {$A_0$};
			\node[lab,left=3pt of A1] {$A_1$};
			\node[lab,left=3pt of A2] {$A_2$};
			\node[lab,left=3pt of A3] {$A_3$};
			
			\node[lab,right=3pt of B0] {$B_0$};
			\node[lab,right=3pt of B1] {$B_1$};
			\node[lab,right=3pt of B2] {$B_2$};
			\node[lab,right=3pt of B3] {$B_3$};
			
			% edges in Q_{1,0}
			\draw (A2)--(A3);
			\draw (B0)--(B1);
			\draw (A2)--(B0);
			\draw (A2)--(B1);
			\draw (A3)--(B0);
			\draw (A3)--(B1);
			
			\node[font=\footnotesize]at (1.4,-1.00)
			{$Q_{1,0}=\{A_2,A_3,B_0,B_1\}$};
			
		\end{tikzpicture}
	\end{minipage}
	\hfill
	\begin{minipage}[t]{0.32\textwidth}
		\centering
		\begin{tikzpicture}[
			x=0.95cm,y=0.78cm,
			dot/.style={circle,draw,fill=white,inner sep=0pt,minimum size=4.8pt},
			used/.style={circle,draw,fill=black,inner sep=0pt,minimum size=4.8pt},
			lab/.style={font=\small}
			]
			
			% vertices on side A
			\node[dot]  (A0) at (0,3) {};
			\node[used] (A1) at (0,2) {};
			\node[dot]  (A2) at (0,1) {};
			\node[used] (A3) at (0,0) {};
			
			% vertices on side B
			\node[used] (B0) at (2.8,3) {};
			\node[dot]  (B1) at (2.8,2) {};
			\node[used] (B2) at (2.8,1) {};
			\node[dot]  (B3) at (2.8,0) {};
			
			% labels
			\node[lab,left=3pt of A0] {$A_0$};
			\node[lab,left=3pt of A1] {$A_1$};
			\node[lab,left=3pt of A2] {$A_2$};
			\node[lab,left=3pt of A3] {$A_3$};
			
			\node[lab,right=3pt of B0] {$B_0$};
			\node[lab,right=3pt of B1] {$B_1$};
			\node[lab,right=3pt of B2] {$B_2$};
			\node[lab,right=3pt of B3] {$B_3$};
			
			% edges in Q_{0,0} and Q_{0,1}
			\draw (A1)--(B0);
			\draw (A3)--(B2);
			\node[font=\footnotesize] at (1.18,-0.98)
			{$Q_{0,0}=\{A_1,B_0\}, Q_{0,1}=\{A_3,B_2\}$};
		\end{tikzpicture}
	\end{minipage}
	\caption{The construction in Lemma~\ref{lem:comparison} for $k=3$.}
	\label{fig:k3-cover}
\end{figure}

We now use Lemma~\ref{lem:comparison} to determine the exact value of $\eqc(C_n^k)$ when $k+1\mid n$.
\begin{theorem}\label{thm:main-divisible}
	For all integers $k\ge1$ and $q\ge2$, we have
	\begin{equation*}
		\eqc\!\left(C_{q(k+1)}^k\right)=1+\ceil{\log_2(k+1)}.
	\end{equation*}
\end{theorem}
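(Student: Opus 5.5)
The lower bound is immediate from Lemma~\ref{lem:lower}, since $n=q(k+1)\ge 2k+2$ when $q\ge2$. So the work is the matching upper bound: cover $E(C_n^k)$ by one equivalence subgraph plus $\lceil\log_2(k+1)\rceil$ further ones.

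Partition the cycle into $q$ consecutive blocks $B_r=\{v_{r(k+1)},\ldots,v_{r(k+1)+k}\}$ for $0\le r<q$. Each block has $k+1$ vertices spanning cyclic distance $k$, so it is a clique of $C_n^k$. Taking the blocks as components gives the first equivalence subgraph $G_0$, which covers every edge inside a block (this is panel (a) of the figure).

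Every remaining edge $v_iv_j$ joins consecutive blocks $B_r,B_{r+1}$, indices mod $q$. Indeed, an edge has cyclic distance at most $k$, so it cannot skip a whole block of size $k+1$. Write $A_a=v_{r(k+1)+a}\in B_r$ and $B'_b=v_{(r+1)(k+1)+b}\in B_{r+1}$. Their distance is $(k+1)+b-a$, so $A_aB'_b$ is an edge exactly when $a>b$. This is precisely the configuration of Lemma~\ref{lem:comparison}: $\lceil\log_2(k+1)\rceil$ equivalence subgraphs of the form $\{Q_{\ell,p}\}$ cover all edges between $B_r$ and $B_{r+1}$.

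The main obstacle is handling all $q$ consecutive pairs simultaneously with the same $\lceil\log_2(k+1)\rceil$ subgraphs, since each block is the ``$A$'' for its successor and the ``$B$'' for its predecessor. In the $\ell$-th subgraph, block $B_r$ contributes its indices with $\lfloor a/2^\ell\rfloor$ odd to cliques with $B_{r+1}$. It contributes its indices with $\lfloor b/2^\ell\rfloor$ even to cliques with $B_{r-1}$. These two index sets are disjoint, so for fixed $\ell$ the union over $r$ of all the cliques $Q^{(r)}_{\ell,p}$ is vertex-disjoint.

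Each $Q^{(r)}_{\ell,p}$ is a clique of $C_n^k$ by Lemma~\ref{lem:comparison}. Hence, for fixed $\ell$, the union over $r$ is a single equivalence subgraph (panels (b), (c) of the figure).

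For $q=2$ the pairs $(B_0,B_1)$ and $(B_1,B_0)$ use different orientations. Here one must check that the parity argument still separates the two roles, which it does because the roles depend only on the orientation. It is also worth checking that no unwanted adjacency arises, and none does: components are only required to be cliques, which we verified.

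This gives $1+\lceil\log_2(k+1)\rceil$ equivalence subgraphs covering $E(C_n^k)$, matching the lower bound.
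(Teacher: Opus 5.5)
Your proposal is correct and follows the paper's proof essentially step for step: the block partition $B_0,\dots,B_{q-1}$ as the first equivalence subgraph, Lemma~\ref{lem:comparison} applied to each consecutive pair $(B_r,B_{r+1})$, vertex-disjointness for fixed $\ell$ because each vertex lies in exactly one clique according to the parity of $\lfloor a/2^\ell\rfloor$ (the paper's Claim~\ref{claim:disjoint}), and Lemma~\ref{lem:lower} for the lower bound. The only looseness is that for $q=2$ your statement ``$A_aB'_b$ is an edge exactly when $a>b$'' is not literally true, since such a pair is also adjacent when $b>a$ via the other arc; however, those edges are exactly the ones handled by the reversed pair $(B_1,B_0)$, as you note, so the coverage argument goes through.
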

\begin{proof}[\bf Proof]
	Write $B_j=\{v_{j(k+1)+a}:0\le a\le k\}$, where $0\le j<q$ with the index $j$ taken modulo $q$. Thus $V(C_n^k)$ is partitioned into $q$ consecutive groups $B_0,\dots,B_{q-1}$, each of size $k+1$. 
	
	Since any two vertices in the same $B_j$ have cyclic distance at most $k$, each $B_j$ induces a clique in $C_n^k$. Hence all edges inside the groups $B_j$ are covered by one equivalence subgraph.

	We next consider edges joining distinct groups. Let $x=v_{j(k+1)+a}\in B_j$ and $y=v_{j'(k+1)+b}\in B_{j'}$. Suppose that $B_j$ and $B_{j'}$ are nonadjacent in the cyclic order.
	As illustrated in Figure~\ref{fig:cyclic-partition}, each of the two $x$--$y$ paths in $C_n$ contains an entire group of $k+1$ vertices. Hence $d_{C_n}(x,y)>k$, and thus $xy\notin E(C_n^k)$.
	Therefore every edge of $C_n^k$ either lies within some $B_j$ or joins $B_j$ and $B_{j+1}$ for some $j$.

	\begin{figure}[H]
		\centering
		\begin{tikzpicture}[
			dot/.style={
				circle,draw=black,fill=white,
				inner sep=0pt,minimum size=4.6pt,
				line width=0.55pt
			},
			enddot/.style={dot,fill=black},
			shadedot/.style={dot,fill=gray!45},
			every node/.style={font=\small}
			]
			\def\R{2.05}
			
			% Solid arcs within the displayed groups
			\foreach \a/\b in {
				100/64,18/-36,-78/-126,-174/-248
			}{
				\draw[gray!65,line width=0.65pt]
				(\a:\R)
				arc[start angle=\a,end angle=\b,
				radius=\R];
			}
			
			% Dashed arcs representing omitted groups
			\foreach \a/\b in {
				64/18,-36/-78,-126/-174
			}{
				\draw[gray!65,densely dashed,
				line width=0.75pt]
				(\a:\R)
				arc[start angle=\a,end angle=\b,
				radius=\R];
			}
			
			% Closing edge from v_{n-1} to v_0
			\draw[gray!65,line width=0.65pt]
			(-248:\R)
			arc[start angle=-248,end angle=-260,
			radius=\R];
			
			% B_0
			\node[enddot] at (100:\R) {};
			\node[dot]    at (82:\R) {};
			\node[enddot] at (64:\R) {};
			\draw[line width=1.25pt]
			(100:2.42)
			arc[start angle=100,end angle=58,
			radius=2.42];
			\node at (82:2.87) {$B_0$};
			
			% B_j
			\node[enddot]   at (18:\R) {};
			\node[shadedot] at (-8:\R) {};
			\node[enddot]   at (-36:\R) {};
			\draw[line width=1.25pt]
			(24:2.42)
			arc[start angle=24,end angle=-42,
			radius=2.42];
			\node at (-9:2.87) {$B_j$};
			
			% B_{j'}
			\node[enddot]   at (-78:\R) {};
			\node[shadedot] at (-102:\R) {};
			\node[enddot]   at (-126:\R) {};
			\draw[line width=1.25pt]
			(-72:2.42)
			arc[start angle=-72,end angle=-132,
			radius=2.42];
			\node at (-102:2.92) {$B_{j'}$};
			
			% B_{q-1}
			\node[enddot] at (-174:\R) {};
			\node[dot]    at (-211:\R) {};
			\node[enddot] at (-248:\R) {};
			\draw[line width=1.25pt]
			(-166:2.42)
			arc[start angle=-166,end angle=-246,
			radius=2.42];
			\node at (-211:2.94) {$B_{q-1}$};
			
			% Vertex labels
			\node[font=\scriptsize]
			at (95:1.55) {$v_0$};
			\node[font=\scriptsize]
			at (14:1.27) {$v_{j(k+1)}$};
			\node[font=\scriptsize]
			at (-10:1.62) {$x$};
			\node[font=\scriptsize]
			at (-64:1.30) {$v_{j'(k+1)}$};
			\node[font=\scriptsize]
			at (-102:1.60) {$y$};
			\node[font=\scriptsize]
			at (177:1.27) {$v_{n-(k+1)}$};
			\node[font=\scriptsize]
			at (125:1.59) {$v_{n-1}$};
			
			\node at (0,0) {$C_n^k$};
			
		\end{tikzpicture}
		\caption{A cyclic partition into groups of size $k+1$.}
		\label{fig:cyclic-partition}
	\end{figure}
	Now consider $B_j$ and $B_{j+1}$. For $0\le a,b\le k$, the path in $C_n$ from $v_{j(k+1)+a}$ to $v_{(j+1)(k+1)+b}$ following the cyclic order has length $k+1+b-a$. Thus this path has length
	at most $k$ if and only if $a>b$. For each $0\le\ell<\lceil\log_2(k+1)\rceil$
	and $p\ge0$, let
	\begin{equation}\label{eq:Qjlp}
		Q_{j,\ell,p}=\{v_{j(k+1)+a}:a\in I_{\ell,p}^1\}\cup\{v_{(j+1)(k+1)+b}:b\in I_{\ell,p}^0\}.
	\end{equation}
	By Lemma~\ref{lem:comparison}, each nonempty $Q_{j,\ell,p}$ is a clique in $C_n^k$.
	\begin{claim}\label{claim:disjoint}
		For each fixed $\ell$, the nonempty cliques $Q_{j,\ell,p}$ are pairwise vertex-disjoint.
	\end{claim}
	\begin{proof}
		Fix $\ell$ and let $v_{j(k+1)+a}\in B_j$. By \eqref{eq:Ilp1}, \eqref{eq:Ilp0} and \eqref{eq:Qjlp}, this vertex belongs to $Q_{j,\ell,p}$ if
		$\lfloor a/2^\ell\rfloor=2p+1$, and to $Q_{j-1,\ell,p}$ if $\lfloor a/2^\ell\rfloor=2p$. Since every nonnegative integer is uniquely of the form $2p$ or $2p+1$, each vertex belongs to exactly one of the sets $Q_{j,\ell,p}$. Consequently,
		\begin{equation*}
			V(C_n^k)=\bigcup_{j=0}^{q-1}\bigcup_{p\ge0}Q_{j,\ell,p},
		\end{equation*}
		where the nonempty sets $Q_{j,\ell,p}$ are pairwise vertex-disjoint.
	\end{proof}
	By Claim~\ref{claim:disjoint}, for each fixed $\ell$, the nonempty cliques $Q_{j,\ell,p}$ form the components of an equivalence subgraph of $C_n^k$.
	
	Now let $uv\in E(C_n^k)$ with $u$ and $v$ in distinct groups. By the preceding argument, a shortest $u$--$v$ path in $C_n$ crosses exactly one boundary, say from $B_j$ to $B_{j+1}$ in the cyclic order. Writing $u=v_{j(k+1)+a}$ and $v=v_{(j+1)(k+1)+b}$, we have $k+1+b-a\le k$, and hence $a>b$. By the construction in Lemma~\ref{lem:comparison}, $u,v\in Q_{j,\ell,p}$ for some $\ell,p$. Thus all edges joining distinct groups can be covered by $\lceil\log_2(k+1)\rceil$
	equivalence subgraphs. Together with the equivalence subgraph whose components are $B_0,\ldots,B_{q-1}$, we obtain
	\begin{equation*}
		\operatorname{eq}(C_n^k)\le 1+\lceil\log_2(k+1)\rceil.
	\end{equation*}
	The reverse inequality follows from Lemma~\ref{lem:lower}, completing the proof.
\end{proof}
\begin{remark}
	We illustrate the construction for $C_{12}^3$ in Figure~\ref{fig:C12-cover}. Label the vertices
	$v_1,\ldots,v_{12}$ in cyclic order, and let $B_0=\{v_1,v_2,v_3,v_4\}$, $B_1=\{v_5,v_6,v_7,v_8\}$ and $B_2=\{v_9,v_{10},v_{11},v_{12}\}$. The first equivalence subgraph has $B_0,B_1,B_2$ as its components, as shown in Figure~\ref{fig:C12-cover}(a).
	
	For $\ell=1$, we have $I_{1,0}^1=\{2,3\}$ and $I_{1,0}^0=\{0,1\}$, giving the cliques $	Q_{0,1,0}=\{v_3,v_4,v_5,v_6\}$, $Q_{1,1,0}=\{v_7,v_8,v_9,v_{10}\}$ and $Q_{2,1,0}=\{v_{11},v_{12},v_1,v_2\}$.
	
	For $\ell=0$, we have $I_{0,0}^1=\{1\}$, $I_{0,0}^0=\{0\}$, $I_{0,1}^1=\{3\}$ and $I_{0,1}^0=\{2\}$, giving $Q_{0,0,0}=\{v_2,v_5\}$, $Q_{0,0,1}=\{v_4,v_7\}$, $Q_{1,0,0}=\{v_6,v_9\}$, $Q_{1,0,1}=\{v_8,v_{11}\}$, $Q_{2,0,0}=\{v_{10},v_1\}$ and $Q_{2,0,1}=\{v_{12},v_3\}$.
	
	These are the components of the equivalence subgraphs in Figures~\ref{fig:C12-cover}(b) and~\ref{fig:C12-cover}(c), respectively.
	\begin{figure}[H]
		\centering
		\begin{minipage}[t]{0.32\linewidth}\centering\resizebox{\linewidth}{!}{\cyclecover{0}}\end{minipage}\hfill
		\begin{minipage}[t]{0.32\linewidth}\centering\resizebox{\linewidth}{!}{\cyclecover{1}}\end{minipage}\hfill
		\begin{minipage}[t]{0.32\linewidth}\centering\resizebox{\linewidth}{!}{\cyclecover{2}}\end{minipage}
		\caption{The three equivalence subgraphs in the construction for $C_{12}^3$.}
		\label{fig:C12-cover}
	\end{figure}
\end{remark}
\begin{theorem}\label{thm:general-upper}
	For every $n\ge2k+2$,
	$\eqc(C_n^k)\le1+2\ceil{\log_2(k+1)}$.
\end{theorem}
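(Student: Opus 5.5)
We prove the bound by partitioning the cycle into consecutive groups whose sizes are all $k+1$ or $k+2$. Then we cover the edges between consecutive groups by combining Lemma~\ref{lem:comparison} with a second family of $\lceil\log_2(k+1)\rceil$ equivalence subgraphs that handles the defect caused by groups of size $k+2$.

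Write $n=q(k+1)+r$ with $q\ge2$ and $0\le r<k+1$. If $r=0$ we are done by Theorem~\ref{thm:main-divisible}. Otherwise, partition $V(C_n)$ into $q$ consecutive arcs $B_0,\dots,B_{q-1}$, each of size $k+1$ or $k+2$. This requires $r\le q$; if $r>q$, allow sizes between $k+1$ and $2k+1$. A cleaner alternative is to use a partition where every group has size at least $k+1$ and at most $2k+1$. In all cases each group has size at least $k+1$, so nonadjacent groups send no edges, as in the proof of Theorem~\ref{thm:main-divisible}.

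The intra-group edges are no longer automatically covered, since a group of size more than $k+1$ need not be a clique. To fix this, take instead groups of size exactly $k+1$ placed consecutively, followed by one leftover arc $R$ of $r$ vertices, where $1\le r\le k$. Inside-group edges for the $B_j$ are covered by one equivalence subgraph, which also contains $R$ as a component since $|R|\le k$. Edges between consecutive full groups are covered by the $\lceil\log_2(k+1)\rceil$ subgraphs of Lemma~\ref{lem:comparison}, exactly as in Theorem~\ref{thm:main-divisible}.

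The remaining edges are those from $B_{q-1}$ to $R$, from $R$ to $B_0$, and from $B_{q-1}$ to $B_0$ across $R$. Concatenate $B_{q-1}$, $R$, $B_0$ into a path segment of $2k+2+r$ vertices. Index the vertices of $R\cup B_0$ from the end and apply Lemma~\ref{lem:comparison} twice, with $A=B_{q-1}$ against the first $k+1$ vertices after it, and with the last $k+1$ vertices before $B_0$ against $B_0$. Every edge between the three pieces $B_{q-1}$, $R$, $B_0$ joins two vertices at distance at most $k$ on this segment, so it lies in one of two windows, each a pair of consecutive blocks of size $k+1$ ordered as in the lemma. The first window uses the existing $\lceil\log_2(k+1)\rceil$ subgraphs: the cliques $Q_{q-1,\ell,p}$ are simply redefined with the shifted block $R\cup(\text{start of }B_0)$, which still leaves all cliques for fixed $\ell$ disjoint except near $B_0$. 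The second window gets $\lceil\log_2(k+1)\rceil$ new subgraphs, whose only nontrivial components live in this segment. Adding the intra-group subgraph gives the total $1+2\lceil\log_2(k+1)\rceil$.

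The main obstacle is the disjointness check in the first family. The cliques $Q_{0,\ell,p}$ between $B_0$ and $B_1$ use vertices of $B_0$ that may also appear in the shifted window. Two remedies are available. One is to let the second family handle the whole junction (both windows), with the first family covering only the standard pairs $(B_j,B_{j+1})$ for $0\le j\le q-2$. Then the junction needs two windows in $\lceil\log_2(k+1)\rceil$ subgraphs, which works if the windows overlap appropriately. The other is to route the $B_0$–$B_1$ pair into the new family instead. I would try the second option first: use the $q-1$ standard pairs ($B_0B_1,\dots,B_{q-2}B_{q-1}$) in family one, and put both junction windows in family two. This requires showing that the two junction windows can share one family. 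If needed, shift the second window so it is vertex-disjoint from the first, which is possible when $q\ge3$; the case $q=2$ can be handled directly since then $n\le 3k+2$.
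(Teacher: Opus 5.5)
\textbf{Verdict: there is a genuine gap.} Your reduction to the segment $B_{q-1}RB_0$ is correct, and so is the fact that every cross edge there lies in one of your two windows. But the assignment you finally choose (standard pairs $(B_j,B_{j+1})$, $0\le j\le q-2$, in family one; both junction windows in family two) cannot work. It is not a verification you merely left out.

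\textbf{Why the chosen plan fails.} For $q\ge3$, no clique of family one or of the intra-group subgraph meets $R$ or contains vertices of both $B_{q-1}$ and $B_0$. So family two alone must cover every edge of the segment joining two different pieces. This is impossible in general. Take $r=1$, number the segment $0,\dots,2k+2$, and let $x_i$ and $y_j$ be the vertices in positions $i$ and $k+1+j$, for $0\le i,j\le k+1$.
\begin{itemize}
\item For $j\ge i$, $x_iy_j$ is a non-edge of $C_n^k$, since the distance the other way round is at least $n-2k-2\ge k+2$.
\item For $j<i$, either $x_i=y_j$ or $x_iy_j$ joins two different pieces, so family two must cover it.
\end{itemize}
The rank argument of Lemma~\ref{lem:lower}, with the product taken only over family two, gives an upper triangular $(k+2)\times(k+2)$ matrix with nonzero diagonal. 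Hence family two needs at least $\lceil\log_2(k+2)\rceil$ members. When $k+1$ is a power of two this is $1+\lceil\log_2(k+1)\rceil$, and your total becomes $2+2\lceil\log_2(k+1)\rceil$. For $k=1$ this just says the two edges at the single vertex of $R$ cannot lie in one matching. No shift of the windows can help, since both windows must contain $R$. The case $q=2$ is asserted rather than proved.

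\textbf{The missing idea.} Your first window is larger than necessary. Every edge from $B_{q-1}$ to $B_0$ already lies in the second window, so the first window only has to cover the edges from $B_{q-1}$ to $R$. Give the vertices of $R$ the unshifted indices $0,\dots,r-1$, as if $R$ were the start of a further block after $B_{q-1}$. Then add the pair $(B_{q-1},R)$ to family one like any other consecutive pair. For each $\ell$ this pair uses two kinds of vertices:
\begin{itemize}
\item vertices of $B_{q-1}$ with index $a$ such that $\lfloor a/2^\ell\rfloor$ is odd, which $(B_{q-2},B_{q-1})$ does not use;
\item vertices of $R$, which family one does not otherwise touch.
\end{itemize}
So disjointness holds for every $q\ge2$. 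Family two is then the second window alone, i.e.\ Lemma~\ref{lem:comparison} applied to $A_a=v_{n-k-1+a}$ and $B_b=v_b$.

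This is exactly the paper's proof. The intra-group subgraph together with the enlarged family one covers $P_n^k$ with $1+\lceil\log_2(k+1)\rceil$ equivalence subgraphs. The paper gets this more quickly by viewing $P_n^k$ as an induced subgraph of $C^k_{q'(k+1)}$ with $q'(k+1)\ge n+k+1$, then invoking Lemma~\ref{lem:monotonicity} and Theorem~\ref{thm:main-divisible}. The wrap-around edges are then handled by one more application of Lemma~\ref{lem:comparison}.

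\textbf{A variant you named but then misdescribed.} Your first-stated ``second option'' moves $(B_0,B_1)$ into family two beside the second window, and puts the first window into family one with the pairs $(B_j,B_{j+1})$, $1\le j\le q-2$. This is actually conflict-free for $q\ge3$: the second window and the cliques $Q_{0,\ell,p}$ use disjoint parts of $B_0$ (indices with $\lfloor a/2^\ell\rfloor$ even, respectively odd). It fails only for $q=2$, where the second window meets $B_1$.
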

\begin{proof}[\bf Proof]
	Let $P_n=v_0v_1\cdots v_{n-1}$. Since $P_n^k$ is an induced subgraph of $C_{q(k+1)}^k$ whenever
	$q(k+1)\ge n+k+1$, Lemma~\ref{lem:monotonicity} and Theorem~\ref{thm:main-divisible} give
	\begin{equation}\label{eq:eqpnk}
		\operatorname{eq}(P_n^k)\le 1+\lceil\log_2(k+1)\rceil.
	\end{equation}
	It remains to cover the edges of $C_n^k$ not belonging to $P_n^k$. Put $A_a=v_{n-k-1+a}$ and $B_b=v_b$, where $0\le a,b\le k$. Since $n\ge 2k+2$, the vertices $A_a$ and $B_b$ form two vertex-disjoint cliques in $C_n^k$. Moreover, the edges of $C_n^k$ not belonging to $P_n^k$ are exactly $A_aB_b$ with $a>b$, since the path between them through $v_{n-1}v_0$ has length $k+1-a+b$.
	
	By Lemma~\ref{lem:comparison}, these edges can be covered by $\lceil\log_2(k+1)\rceil$ equivalence subgraphs. Using \eqref{eq:eqpnk}, we have
	\begin{equation*}
		\operatorname{eq}(C_n^k)\le \operatorname{eq}(P_n^k)+\lceil\log_2(k+1)\rceil\le 1+2\lceil\log_2(k+1)\rceil,
	\end{equation*}
	as required.
\end{proof}
Combining Lemma~\ref{lem:lower} and Theorem~\ref{thm:general-upper}, this proves Theorem~\ref{thm:main}.
\section{Proof of Theorem~\ref{thm:exact-values}}\label{sec:1.3}
In this section, we prove Theorem~\ref{thm:exact-values}. We first use induced subgraph embeddings to obtain an upper bound and determine the exact value of $\eqc(C_n^k)$ in the first case. We then establish a stronger lower bound when $n$ is odd and combine it with the upper bound to prove the second case.
\begin{lemma}\label{lem:embedding}
	Let $n\ge 2k+2$, $N\ge n$, and $K\ge1$ be integers. If $kN\le Kn$ and $(k+1)N\ge(K+1)n$, then $C_n^k$ is an induced subgraph of $C_N^K$.
\end{lemma}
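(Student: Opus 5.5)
We will define an explicit embedding. Take $f:\{0,\dots,n-1\}\to\{0,\dots,N-1\}$ by $f(i)=\lfloor iN/n\rfloor$; since $N\ge n$ this map is injective. We show that it maps $C_n^k$ isomorphically onto the subgraph of $C_N^K$ induced on its image. Write $d_n$ and $d_N$ for cyclic distance in $C_n$ and $C_N$. The target statement is that for distinct $i,j$ we have $d_n(i,j)\le k$ if and only if $d_N(f(i),f(j))\le K$.

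The key step is to estimate forward differences. For $0\le i<j\le n-1$ with $m=j-i$, we have
\[
\left\lfloor \tfrac{mN}{n}\right\rfloor\le f(j)-f(i)\le \left\lceil \tfrac{mN}{n}\right\rceil ,
\]
since $jN/n-iN/n=mN/n$ and the floors can lose at most one unit. The wrap-around difference is $N-(f(j)-f(i))$. Also $n-m$ plays the role of $m$ going the other way, and $N-\lceil mN/n\rceil=\lfloor (n-m)N/n\rfloor$, so both arcs obey the same kind of estimate. Thus each of the two arcs between $f(i)$ and $f(j)$ has length between $\lfloor m'N/n\rfloor$ and $\lceil m'N/n\rceil$, where $m'\in\{m,n-m\}$ is the corresponding arc length in $C_n$.

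Now use the hypotheses. If $m'\le k$, then $m'N/n\le kN/n\le K$, so $\lceil m'N/n\rceil\le K$ because $K$ is an integer. Hence $d_N\le K$, and edges are preserved. Conversely, suppose both arcs in $C_n$ have length at least $k+1$, i.e.\ $d_n(i,j)\ge k+1$. Then each arc in $C_N$ has length at least $\lfloor (k+1)N/n\rfloor\ge \lfloor K+1\rfloor=K+1$, using $(k+1)N\ge (K+1)n$. Hence $d_N>K$, and non-edges are preserved.

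The main obstacle is the rounding bookkeeping in the wrap-around arc, namely checking that $N-(f(j)-f(i))$ is still sandwiched between the floor and ceiling of $(n-m)N/n$. This follows from $N$ being an integer, via $N-\lceil x\rceil=\lfloor N-x\rfloor$. The assumption $n\ge 2k+2$ is used only to guarantee that $C_n^k$ has genuine non-edges and that cyclic distance is the minimum over the two arcs. The injectivity of $f$ follows from $N/n\ge1$.
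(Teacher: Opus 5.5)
Your proof is correct and is essentially the paper's argument. The paper subdivides each edge $v_iv_{i+1}$ into a path of length $\lfloor (i+1)N/n\rfloor-\lfloor iN/n\rfloor$, which places $v_i$ exactly at your $f(i)=\lfloor iN/n\rfloor$. It then uses the same sandwich of each arc length between $\lfloor dN/n\rfloor$ and $\lceil dN/n\rceil$, together with $\lceil kN/n\rceil\le K$ and $\lfloor (k+1)N/n\rfloor\ge K+1$; your explicit treatment of the wrap-around arc via $N-\lceil x\rceil=\lfloor N-x\rfloor$ just makes precise a step the paper leaves implicit.
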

\begin{proof}[\bf Proof]
	For $0\le i<n$, replace the edge $v_iv_{i+1}$ of $C_n$ by a path of length $\left\lfloor{(i+1)N}/{n}\right\rfloor-\left\lfloor{iN}/{n}\right\rfloor$. Since $N\ge n$, these lengths are positive and satisfy
	\begin{equation*}
		\sum_{i=0}^{n-1} \left( \left\lfloor \frac{(i+1)N}{n} \right\rfloor - \left\lfloor \frac{iN}{n} \right\rfloor \right) = \left\lfloor \frac{nN}{n} \right\rfloor = N.
	\end{equation*}
	Thus the resulting graph is $C_N$. Moreover, for $1\le d<n$, the path $v_iv_{i+1}\cdots v_{i+d}$ in $C_n$ is replaced by a path in $C_N$ whose length satisfies
	\begin{equation*}
		\left\lfloor \frac{dN}{n} \right\rfloor \le \left\lfloor \frac{(i+d)N}{n} \right\rfloor - \left\lfloor \frac{iN}{n} \right\rfloor \le \left\lceil \frac{dN}{n} \right\rceil.
	\end{equation*}
	For any distinct vertices $u,v\in V(C_n^k)$, let $d_{C_n}(u, v)$ denote the length of a shortest $u$--$v$ path in $C_n$. If $uv\in E(C_n^k)$, then $d_{C_n}(u, v)\leq k$. Thus
	\begin{equation*}
		d_{C_N}(u, v) \leq \left\lceil \frac{N d_{C_n}(u, v)}{n} \right\rceil \leq \left\lceil \frac{kN}{n} \right\rceil \leq K.
	\end{equation*}
	Hence $uv\in E(C_N^K)$. Conversely, if $u$ and $v$ are distinct and $uv\notin E(C_n^k)$, then both $u$--$v$ paths in $C_n$ have length at least $k+1$. Thus
	\begin{equation*}
		d_{C_N}(u, v) \geq \left\lfloor \frac{N d_{C_n}(u, v)}{n} \right\rfloor\geq \left\lfloor \frac{(k + 1)N}{n} \right\rfloor\geq K + 1.
	\end{equation*}
	Thus $uv\notin E(C_N^K)$. Therefore $C_n^k$ is an induced subgraph of $C_N^K$, as required.
\end{proof}
We now apply Lemma~\ref{lem:embedding} to obtain a sharper upper bound for sufficiently large $n$.
\begin{proposition}\label{prop:quadratic}
	If $n\ge\max\{2k+2,2k^2\}$, then
	$\eqc(C_n^k)\le2+\ceil{\log_2(k+1)}$.
\end{proposition}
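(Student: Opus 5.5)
The plan is to embed $C_n^k$ as an induced subgraph of a larger cycle power whose equivalence covering number is known exactly. Lemma~\ref{lem:embedding} supplies the embedding, Theorem~\ref{thm:main-divisible} supplies the exact value, and Lemma~\ref{lem:monotonicity} transfers the bound. I will take $K=2k+1$, so that $K+1=2(k+1)$ and
\[
1+\ceil{\log_2(K+1)}=1+\ceil{\log_2(2(k+1))}=2+\ceil{\log_2(k+1)}.
\]
So it suffices to find $N\ge n$ with $(K+1)\mid N$, $N/(K+1)\ge 2$, $kN\le Kn$ and $(k+1)N\ge (K+1)n$.

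Set $q=\ceil{n/(k+1)}$ and $N=q(K+1)=2q(k+1)$. Since $n\ge 2k+2$, we have $q\ge 2$. This makes Theorem~\ref{thm:main-divisible} applicable to $C_N^K$, giving $\eqc(C_N^K)=1+\ceil{\log_2(K+1)}$. The second condition of Lemma~\ref{lem:embedding} reads $(k+1)\cdot 2q(k+1)\ge 2(k+1)n$. This is equivalent to $q(k+1)\ge n$, which holds by the choice of $q$. It also gives $N\ge 2n\ge n$.

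The main point is the first condition, $kN\le Kn$, that is,
\[
2qk(k+1)\le (2k+1)n .
\]
The only available control on $q$ is $q(k+1)\le n+k$. Using it, the left-hand side is at most $2k(n+k)=2kn+2k^2$. This is at most $2kn+n$ precisely when $n\ge 2k^2$, which is the hypothesis of the proposition. This is the step where the quadratic threshold enters, and I expect it to be the only delicate point. The natural alternative of taking $N$ just above $2n$ would require a multiple of $2k+2$ inside an interval of length about $n/k$, which needs a slightly larger $n$. Rounding $n$ up to a multiple of $k+1$ first, via the choice of $q$, avoids this.

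With all conditions verified, Lemma~\ref{lem:embedding} shows that $C_n^k$ is an induced subgraph of $C_N^{2k+1}$. Lemma~\ref{lem:monotonicity} together with Theorem~\ref{thm:main-divisible} then give
\[
\eqc(C_n^k)\le \eqc(C_N^{2k+1})=1+\ceil{\log_2(2k+2)}=2+\ceil{\log_2(k+1)}.
\]
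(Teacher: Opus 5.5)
Your proof is correct and follows the paper's route: the same $m=\lceil n/(k+1)\rceil$, the embedding of Lemma~\ref{lem:embedding}, and then Lemma~\ref{lem:monotonicity} with Theorem~\ref{thm:main-divisible}. The only difference is the target graph. The paper uses $C_{m(2k+1)}^{2k}$, which needs an integrality step to get $m(2k+1)\le 2n$ and then the estimate $\lceil\log_2(2k+1)\rceil\le 1+\lceil\log_2(k+1)\rceil$; your choice $C_{2m(k+1)}^{2k+1}$ reduces the key condition directly to $2k(n+k)\le(2k+1)n$ and yields $2+\lceil\log_2(k+1)\rceil$ exactly.
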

\begin{proof}[\bf Proof]
	Let $m=\lceil n/(k+1)\rceil$. Clearly, $m(k+1)\ge n$. Since
	$n\ge2k^2$ and $m\le(n+k)/(k+1)$, we have
	\begin{equation*}
			m(2k+1)\le\frac{(n+k)(2k+1)}{k+1}=2n+\frac{2k^2+k-n}{k+1}\le2n+\frac{k}{k+1}<2n+1.
	\end{equation*}
Since $m(2k+1)$ is an integer, it follows that $m(2k+1)\le2n$. By Lemma~\ref{lem:embedding}, $C_n^k$ is an induced subgraph of $C_{m(2k+1)}^{2k}$. Hence, by Lemma~\ref{lem:monotonicity} and Theorem~\ref{thm:main-divisible}, we have
	\begin{equation*}
		\operatorname{eq}(C_n^k)\le \operatorname{eq}(C_{m(2k+1)}^{2k})=1+\lceil\log_2(2k+1)\rceil\le2+\lceil\log_2(k+1)\rceil.
	\end{equation*}
	This completes the proof.
\end{proof}
Using a similar approach, we next obtain the exact value of $\eqc(C_n^k)$ under a stronger condition on $n$.
\begin{theorem}\label{thm:eq=1}
	Let $k\ge1$ be an integer satisfying $k+1<2^{\lceil\log_2(k+1)\rceil}$, and let $c_k^*=\max\{2k+2,k^2(k+2)\}$. Then, for every integer $n\ge c_k^*$,
	\begin{equation*}
	\eqc(C_n^k)=1+\lceil\log_2(k+1)\rceil.
	\end{equation*}
\end{theorem}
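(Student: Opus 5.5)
Let $r=\lceil\log_2(k+1)\rceil$. Since $k+1<2^r$, we have $K+1\le 2^r$, so $\lceil\log_2(K+1)\rceil=r$, for every $K$ with $k\le K\le 2^r-1$. The lower bound $\eqc(C_n^k)\ge1+r$ is Lemma~\ref{lem:lower}. For the upper bound, the plan is to embed $C_n^k$ as an induced subgraph of some $C_N^K$ with $(K+1)\mid N$, $N\ge2(K+1)$ and $K\le 2^r-1$. Then Lemma~\ref{lem:monotonicity} and Theorem~\ref{thm:main-divisible} give
\[
\eqc(C_n^k)\le\eqc(C_N^K)=1+\lceil\log_2(K+1)\rceil=1+r.
\]
The natural choice is $K=k+1$, which is allowed since $k+2\le 2^r$. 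It remains to find $N=m(k+2)$ satisfying the hypotheses of Lemma~\ref{lem:embedding}, namely $N\ge n$, $kN\le(k+1)n$ and $(k+1)N\ge(k+2)n$.

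Concretely, we need an integer $m$ with
\[
\frac{(k+2)n}{(k+1)(k+2)}=\frac{n}{k+1}\le m\le\frac{(k+1)n}{k(k+2)}.
\]
The upper endpoint exceeds the lower one by
\[
n\Bigl(\frac{k+1}{k(k+2)}-\frac1{k+1}\Bigr)=\frac{n}{k(k+1)(k+2)}.
\]
So take $m=\lceil n/(k+1)\rceil\le (n+k)/(k+1)$. It suffices that $(n+k)/(k+1)\le (k+1)n/(k(k+2))$, i.e.
\[
k(k+2)(n+k)\le (k+1)^2 n,
\]
which simplifies to $n\ge k^2(k+2)$. This is exactly the hypothesis $n\ge c_k^*$.

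Next we check the remaining conditions. First, $N=m(k+2)\ge n(k+2)/(k+1)\ge n$. Second, $N\ge2(K+1)=2(k+2)$ requires $m\ge2$, which follows from $n\ge 2k+2$. The divisibility $(K+1)\mid N$ holds by construction. Lemma~\ref{lem:embedding} also needs $n\ge2k+2$, which is assumed. With these, $C_n^k$ is an induced subgraph of $C^{k+1}_{m(k+2)}$, and we conclude.

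The main obstacle is just the arithmetic of the embedding inequalities, specifically confirming that the threshold $k^2(k+2)$ is precisely what makes $\lceil n/(k+1)\rceil$ fit in the interval. The edge case $k=1$ is excluded automatically, since $k+1=2$ is a power of two. No other case split is needed.
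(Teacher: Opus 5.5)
Your proof is correct and follows the paper's route. The lower bound comes from Lemma~\ref{lem:lower}, and the upper bound comes from embedding $C_n^k$ into $C^K_{m(K+1)}$ via Lemma~\ref{lem:embedding} with $m=\lceil n/(k+1)\rceil$, then applying Theorem~\ref{thm:main-divisible}. The only difference is the choice of target power: the paper takes the largest admissible $K=2^{\lceil\log_2(k+1)\rceil}-1$, which yields the threshold $k^2 2^{\lceil\log_2(k+1)\rceil}/(2^{\lceil\log_2(k+1)\rceil}-k-1)\le k^2(k+2)$, whereas your choice $K=k+1$ hits the stated threshold $k^2(k+2)$ exactly.
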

\begin{proof}[\bf Proof]
	Let $m=\lceil n/(k+1)\rceil$ and $r=2^{\lceil\log_2(k+1)\rceil}$. Since $k+1$ is not a power of two, we have $r-k-1\ge1$. Thus
	\begin{equation*}
		\frac{r}{r-k-1}=1+\frac{k+1}{r-k-1}\le k+2.
	\end{equation*}
	By $n\ge k^2(k+2)$ and $m\le(n+k)/(k+1)$, we obtain
	\begin{equation*}
		(r-1)n-kmr\ge\frac{(r-k-1)n-k^2r}{k+1}\ge0.
	\end{equation*}
	Moreover, $mr\ge n$ and $(k+1)mr\ge rn$. By Lemma~\ref{lem:embedding}, $C_n^k$ is an induced subgraph of $C_{mr}^{r-1}$. Hence, by Lemma~\ref{lem:monotonicity} and Theorem~\ref{thm:main-divisible}, we have
	\begin{equation*}
		\eqc(C_n^k)\le\eqc(C_{mr}^{r-1})=1+\lceil\log_2r\rceil=1+\lceil\log_2(k+1)\rceil.
	\end{equation*}
	The reverse inequality follows from Lemma~\ref{lem:lower}, completing the proof.
\end{proof}
We next consider the case $k+1=2^{\lceil\log_2(k+1)\rceil}$ and establish a stronger lower bound when $n$ is odd.
\begin{theorem}\label{thm:odd-power}
	Let $k\ge1$ be an integer satisfying $k+1=2^{\lceil\log_2(k+1)\rceil}$. For every odd integer $n\ge2k+2$, we have
	\begin{equation*}
		\eqc(C_n^k)\ge2+\lceil\log_2(k+1)\rceil.
	\end{equation*}
\end{theorem}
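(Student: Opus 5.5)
The plan is to argue by contradiction. Write $r=\lceil\log_2(k+1)\rceil$, so that $k+1=2^r$, and suppose that $C_n^k$ is covered by $t=r+1$ spanning equivalence subgraphs $G_1,\dots,G_t$. Then $2^t=2k+2$, so the rank argument in the proof of Lemma~\ref{lem:lower} is exactly tight. The aim is to extract rigid structure from this tightness, and then to show that this structure is incompatible with $n$ being odd.

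First, I will use tightness to pin down the covering. For each vertex $v$ let $u(v)=\bigotimes_{s=1}^t(1,c_s(v))\in\R^{2^t}$, so that $\varphi(x,y)$ is a fixed bilinear form $\langle u(x),Dw(y)\rangle$, where $w(y)=\bigotimes_s(c_s(y),-1)$. For every $i$, the $2k+2=2^t$ vectors $u(v_i),\dots,u(v_{i+2k+1})$ form a basis, and so do $w(v_{i+k+1}),\dots,w(v_{i+3k+2})$, because the matrix $M$ of Lemma~\ref{lem:lower} for every cyclic window is invertible. The tensor structure is the key point. If some $c_s$ took at least three distinct values on a window, or if the $2^t$ vectors in a window were not the full set of products over a $2\times\cdots\times2$ grid, the span of a window would collapse below $2^t$. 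I expect this to show that, on each window, every coordinate $c_s$ takes exactly two values, say $\alpha_s\ne\beta_s$. It should also show that the $2^t$ vertices of the window realize each pattern in $\{\alpha_s,\beta_s\}^t$ exactly once. Then nonadjacency, which means $c_s(x)\ne c_s(y)$ for every $s$, becomes \emph{complementation} of binary codes within a window. I will then check that consecutive overlapping windows share the same pair $\{\alpha_s,\beta_s\}$, so that one global binary code $\sigma:V\to\{0,1\}^t$ results.

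Second, with the binary encoding in hand, the parity argument is short. Since $v_i$ and $v_{i+k+1}$ are nonadjacent (cyclic distance $k+1\le n-k-1$), we get $\sigma(v_{i+k+1})=\overline{\sigma(v_i)}$ for all $i$, where the bar denotes complementation. Iterating gives $\sigma(v_{m(k+1)})=\overline{\sigma(v_0)}$ for odd $m$. Because $n$ is odd and $k+1$ is a power of two, $\gcd(k+1,n)=1$, so $n(k+1)\equiv0\pmod n$ with $n$ odd. Taking $m=n$ yields $\sigma(v_0)=\overline{\sigma(v_0)}$, which is a contradiction. Hence $t\ge r+2$. As a sanity check, for $n=2k+3$ the nonzero pattern of $(\varphi(v_i,v_j))$ is exactly an odd cycle through the steps $\pm(k+1)$, which matches this obstruction.

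The main obstacle is the first step: showing that rank tightness forces each $c_s$ to be two-valued and forces the codes to be complementary and consistent across windows. If the direct tensor-rank argument proves awkward, I would instead work with the relations $u(v_{i+2k+2})\in\operatorname{span}\{u(v_i),\dots,u(v_{i+2k+1})\}$ and pair them against the dual basis $w(\cdot)$, using the triangular zero pattern. The goal there is to prove $u(v_{i+2k+2})=\lambda_i u(v_i)$, and hence $c_s(v_{i+2k+2})=c_s(v_i)$ by the first coordinate. Since $\gcd(2k+2,n)=1$ for odd $n$, this periodicity would make every $u(v)$ proportional to $u(v_0)$. That contradicts the fact that $u(v_0),\dots,u(v_{2k+1})$ form a basis, and it gives the theorem without the full binary reduction.
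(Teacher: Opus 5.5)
There is a genuine gap. Everything rests on Step~1 or its fallback, and the only input either one uses is that every window matrix $M$ of Lemma~\ref{lem:lower} is invertible. That input holds for every $n\ge 2k+2$, even or odd. Tight coverings with $t=r+1$ do exist for even $n$, so anything you derive from that input alone must also hold for them, and your claimed structure does not.

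Take $k=1$, $t=2$, and $C_{2q}$ with $q\ge3$, covered by the two perfect matchings with components $\{v_{2j},v_{2j+1}\}$ and $\{v_{2j+1},v_{2j+2}\}$; this is the construction of Theorem~\ref{thm:main-divisible}. On the window $v_0,v_1,v_2,v_3$, the coordinate $c_2$ takes three distinct values, coming from the components $\{v_{2q-1},v_0\}$, $\{v_1,v_2\}$ and $\{v_3,v_4\}$. Yet $u(v_0),\dots,u(v_3)$ still form a basis, so the claimed ``collapse'' of the tensor span is false.

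Worse, the structure you aim for is impossible in \emph{any} covering. Suppose each pattern in $\{\alpha_s,\beta_s\}^t$ were realized once in the window and nonadjacency were complementation. Then each vertex would have exactly one non-neighbour in its window. But $v_i$ is non-adjacent to both $v_{i+k+1}$ and $v_{i+k+2}$ as soon as $n\ge 2k+3$. A derivation of this structure from window invertibility would therefore also rule out tight coverings for even $n$, contradicting Theorem~\ref{thm:main-divisible}.

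The fallback fails too, and the computation you propose actually refutes its own goal. Write $u(v_{i+2k+2})=\sum_{a=0}^{2k+1}\mu_a u(v_{i+a})$ and apply $\varphi(\cdot,v_{i+k+1+b})$, which is linear in $u(\cdot)$. This gives $\mu M=(\varphi(v_{i+2k+2},v_{i+k+1}),0,\dots,0)$. Triangularity then forces $\mu_0\ne0$ and $\mu_1=-\mu_0M_{01}/M_{11}$. Here $M_{01}=\varphi(v_i,v_{i+k+2})\ne0$ because $n\ge2k+3$, so $\mu_1\neq0$. Since the window vectors are independent, $u(v_{i+2k+2})$ is never proportional to $u(v_i)$, and the periodicity $c_s(v_{i+2k+2})=c_s(v_i)$ is not available.

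What is missing is an invariant that actually detects the parity of $n$; nothing in your binary-code picture plays that role. The paper obtains one by working over a field $\mathbb F$ of characteristic two. There $\prod_j(c_j(u)+c_j(v))$ is the polar form $B$ of a quadratic form $Q$ on $\mathbb F^{2(k+1)}$. Moreover $Q(x_i)=2^{t-1}\prod_j c_j(v_i)=0$, because $k+1=2^{t-1}$ is even. Consequently the spans $W_i$ of $k+1$ consecutive vertices (not $2k+2$) are maximal totally singular subspaces, and consecutive ones meet in dimension $k$. The parity of $\dim(W_0\cap W_i)$ then flips at every step, and since $W_n=W_0$ this forces $n$ to be even.
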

\begin{proof}[\bf Proof]
	Suppose that $C_n^k$ can be covered by $t=\lceil\log_2(k+1)\rceil+1$ equivalence subgraphs $G_1,\ldots,G_t$. Let $\mathbb F$ be an infinite field of characteristic two. For each $j\in[t]$,
	assign distinct elements of $\mathbb F$ to the components of $G_j$, and let $c_j(v)$
	denote the element assigned to the component containing $v$. Since each component of $G_j$ is a clique, for any distinct vertices $u,v$, we have $c_j(u)=c_j(v)\iff uv\in E(G_j)$. As $G_1,\ldots,G_t$ cover $C_n^k$, it follows that
	\begin{equation*}
		\Psi(u,v)=\prod_{j=1}^t(c_j(u)+c_j(v))
		\begin{cases}
			=0,&u=v\text{ or }uv\in E(C_n^k),\\
			\ne0,&u\neq v\text{ and }uv\notin E(C_n^k).
		\end{cases}
	\end{equation*}
	For each vertex $v_i$, define
	\begin{equation}\label{eq:xi}
		x_i=\left(\prod_{j\in S}c_j(v_i)\right)_{S\subseteq[t]}\in\mathbb F^{2(k+1)},
	\end{equation}
where the coordinates are indexed by the subsets $S\subseteq[t]$ and the empty product is $1$. For an arbitrary vector $x=(x_S)_{S\subseteq[t]}$, define
	\begin{equation*}
		Q(x)=\sum_{\substack{S\subseteq[t]\\1\in S}}x_Sx_{[t]\setminus S},
	\end{equation*}
	and let $B(x,y)=Q(x+y)-Q(x)-Q(y)$ be its associated bilinear form. Expanding the definition gives
\begin{align*}
	B(x,y)
	&=\sum_{\substack{S\subseteq[t]\\1\in S}}
	\Bigl(
	(x_S+y_S)(x_{[t]\setminus S}+y_{[t]\setminus S})
	-x_Sx_{[t]\setminus S}
	-y_Sy_{[t]\setminus S}
	\Bigr)\\
	&=\sum_{\substack{S\subseteq[t]\\1\in S}}
	\left(
	x_Sy_{[t]\setminus S}
	+y_Sx_{[t]\setminus S}
	\right)\\
	&=\sum_{\substack{S\subseteq[t]\\1\in S}}
	x_Sy_{[t]\setminus S}
	+\sum_{\substack{S\subseteq[t]\\1\notin S}}
	x_Sy_{[t]\setminus S}\\
	&=\sum_{S\subseteq[t]}x_Sy_{[t]\setminus S}.
\end{align*}
Thus the matrix of $B$ is a permutation matrix, and hence $B$ is nondegenerate. By \eqref{eq:xi}, we have
\begin{equation*}
B(x_i,x_j)=\sum_{S\subseteq[t]}\left(\prod_{a\in S}c_a(v_i)\right)\left(\prod_{a\notin S}c_a(v_j)\right)=\prod_{a=1}^t\bigl(c_a(v_i)+c_a(v_j)\bigr)=\Psi(v_i,v_j).
\end{equation*}
Consequently,
\begin{equation*}
B(x_i,x_j)
\begin{cases}
	=0, & i=j\text{ or }v_iv_j\in E(C_n^k),\\
	\ne0, & i\ne j\text{ and }v_iv_j\notin E(C_n^k).
\end{cases}
\end{equation*}
Moreover, since $2^{t-1}=k+1$ is even and $\mathbb F$ has characteristic two, for every $i$ we have
\begin{align}\label{eq:Qxi}
Q(x_i)&=\sum_{\substack{S\subseteq[t]\\1\in S}}(x_i)_S(x_i)_{[t]\setminus S}=\sum_{\substack{S\subseteq[t]\\1\in S}}
\left(\prod_{j\in S}c_j(v_i)\right)
\left(\prod_{j\in[t]\setminus S}c_j(v_i)\right)=\sum_{\substack{S\subseteq[t]\\1\in S}}
\prod_{j=1}^t c_j(v_i)\notag\\
&=2^{t-1}\prod_{j=1}^t c_j(v_i)=(k+1)\prod_{j=1}^t c_j(v_i)=0.
\end{align}
For each $i$, let
\begin{equation*}
W_i=\operatorname{span}\{x_i,x_{i+1},\ldots,x_{i+k}\},
\end{equation*}
where vertex indices are taken modulo $n$. Since $v_i,\ldots,v_{i+k}$ form a clique in $C_n^k$, we have $B(x_{i+a},x_{i+b})=0$ for $0\le a,b\le k$. Moreover, by \eqref{eq:Qxi}, we have $Q(x_{i+a})=0$ for every $0\le a\le k$. Thus, for any $x=\sum_{a=0}^k\lambda_a x_{i+a}\in W_i$, we obtain
\begin{equation*}
Q(x)=\sum_{a=0}^k\lambda_a^2Q(x_{i+a})+\sum_{0\le a<b\le k}\lambda_a\lambda_b B(x_{i+a},x_{i+b})=0.
\end{equation*}
Hence $Q$ vanishes on $W_i$.
\begin{claim}\label{claim:dimWi=k+1}
$\dim W_i=k+1$.
\end{claim}
\begin{proof}
Consider the $(k+1)\times(k+1)$ matrix $M=\bigl(B(x_{i+a},x_{i+k+1+b})\bigr)_{0\le a,b\le k}$.  If $a>b$, then $1\le k+1+b-a\le k$, so $v_{i+a}v_{i+k+1+b}\in E(C_n^k)$ and hence $M_{ab}=0$. If $a=b$, then $d_{C_n}(v_{i+a},v_{i+k+1+a})=k+1$, as $n\ge2k+2$. Thus $M_{aa}\ne0$. Consequently, $M$ is upper triangular with nonzero diagonal, and hence $k+1=\operatorname{rank}M\le\dim W_i\le k+1$.
\end{proof}
\begin{claim}\label{claim:dim(WicapWi+1)=k}
$\dim(W_i\cap W_{i+1})=k$.
\end{claim}
\begin{proof}
By Claim~\ref{claim:dimWi=k+1}, $W_i$ and $W_{i+1}$ contain the $k$ linearly independent vectors $x_{i+1},\ldots,x_{i+k}$. Hence $\dim(W_i\cap W_{i+1})\ge k$. We claim that $W_i\ne W_{i+1}$. Otherwise, $x_{i+k+1}\in W_i$, so
\begin{equation*}
x_{i+k+1}=\sum_{a=0}^k\lambda_a x_{i+a}
\end{equation*}
for some $\lambda_0,\ldots,\lambda_k\in\mathbb F$. Since $B(x_i,x_{i+a})=0$ for $0\le a\le k$, we obtain
\begin{equation*}
B(x_i,x_{i+k+1})=\sum_{a=0}^k\lambda_a B(x_i,x_{i+a})=0.
\end{equation*}
However, $d_{C_n}(v_i,v_{i+k+1})=k+1$, so $B(x_i,x_{i+k+1})\ne0$, a contradiction. Since both spaces have dimension $k+1$, we conclude that $\dim(W_i\cap W_{i+1})=k$.
\end{proof}
\begin{claim}\label{claim:dimWU-dimWv=1}
Let $U,V,W$ be $(k+1)$-dimensional subspaces of $\mathbb F^{2(k+1)}$ on which $Q$ vanishes.
If $\dim(U\cap V)=k$, then 
\begin{equation*}
|\dim(W\cap U)-\dim(W\cap V)|=1.
\end{equation*}
\end{claim}
\begin{proof}
Let $L=U\cap V$ and $r=\dim(W\cap L)$. Since $Q$ vanishes on $U,V,W$, so does $B$. As $B$ is nondegenerate and these subspaces have dimension $k+1$, we have $U^\perp=U$, $V^\perp=V$ and $W^\perp=W$. Moreover, since $\dim L=k$, we have
\begin{equation*}
\dim(U+V)=k+2=\dim L^\perp.
\end{equation*}
As $U+V\subseteq L^\perp$, it follows that $U+V=L^\perp$. Since $W^\perp=W$ and $(W+L)^\perp=W^\perp\cap L^\perp$, we have
\begin{align*}
\dim(W\cap(U+V))
&=\dim(W\cap L^\perp)
\\&=\dim((W+L)^\perp)
\\&=2(k+1)-\dim(W+L)
\\&=2(k+1)-(2k+1-r)
\\&=r+1.
\end{align*}
Choose $w\in W\cap(U+V)$ with $w\notin L$, and write $w=u+v$, where $u\in U$ and $v\in V$.
Since $Q$ vanishes on $U,V,W$, we have
\begin{equation*}
B(u,v)=Q(w)-Q(u)-Q(v)=0.
\end{equation*}
We next show that $u\in L$ or $v\in L$. Suppose that $u\notin L$ and $v\notin L$. Since $L\subseteq U$ and $B$ vanishes on $U$, we have $B(u,z)=0$ for every $z\in L$. Moreover, as $\dim L=k$, $\dim V=k+1$, and $v\in V\setminus L$, we have $V=L+\operatorname{span}\{v\}$. Together with $B(u,v)=0$, this gives
$u\in V^\perp=V$. Since $u\in U$, we obtain $u\in U\cap V=L$, a contradiction. Thus $u\in L$ or $v\in L$. As $w\notin L$, exactly one of these holds, and hence $w$ belongs to exactly one of $U$ and $V$.

Without loss of generality, suppose that $w\in U\setminus L$. Since $w\in W$ and $\dim(W\cap L)=r$, we have $\dim(W\cap U)\ge r+1$. On the other hand, since $W\cap U\subseteq W\cap(U+V)$ and
$\dim(W\cap(U+V))=r+1$, we obtain $\dim(W\cap U)\le r+1$. Thus $\dim(W\cap U)=r+1$.

We show that $\dim(W\cap V)=r$. Suppose that $\dim(W\cap V)=r+1$. Since $\dim(W\cap L)=r$, there exists $v'\in(W\cap V)\setminus L$. As $w\in U$ and $L\subseteq U$, we have $B(w,z)=0$ for every $z\in L$. Moreover, since $\dim V=k+1$, $\dim L=k$, and $v'\in V\setminus L$, we have $V=L+\operatorname{span}\{v'\}$. If $B(w,v')=0$, then $w\in V^\perp=V$, contrary to $w\in U\setminus L$. Thus $B(w,v')\ne0$. However, since $w,v'\in W$ and $B$ vanishes on $W$, we have $B(w,v')=0$, a contradiction. Hence $|\dim(W\cap U)-\dim(W\cap V)|=1$, as required.
\end{proof}
By Claim~\ref{claim:dim(WicapWi+1)=k}, we have $\dim(W_i\cap W_{i+1})=k$. Applying Claim~\ref{claim:dimWU-dimWv=1} with $U=W_i$, $V=W_{i+1}$ and $W=W_0$, we obtain
\begin{equation*}
\left|\dim(W_0\cap W_{i+1})-\dim(W_0\cap W_i)\right|=1.
\end{equation*}
Hence
\begin{equation*}
\dim(W_0\cap W_{i+1})\equiv\dim(W_0\cap W_i)+1\pmod2.
\end{equation*}
Applying this relation successively for $i=0,\ldots,n-1$, we obtain
\begin{equation*}
\dim(W_0\cap W_n)\equiv\dim(W_0\cap W_0)+n\pmod2.
\end{equation*}
Since vertex indices are taken modulo $n$, we have $W_n=W_0$. Therefore, $k+1\equiv k+1+n\pmod2$, which implies that $n$ is even, contrary to our assumption. Thus $C_n^k$ cannot be covered by $t$ equivalence subgraphs, and hence $\eqc(C_n^k)\ge t+1=\lceil\log_2(k+1)\rceil+2$.
\end{proof}
\begin{theorem}\label{thm:eq=2}
Let $k\ge1$ be an integer satisfying $k+1=2^{\lceil\log_2(k+1)\rceil}$, and let $c_k^*=\max\{2k+2,k^2(k+2)\}$. For every odd integer $n\ge c_k^*$, we have
	\begin{equation*}
		\eqc(C_n^k)=\ceil{\log_2(k+1)}+2.
	\end{equation*}
\end{theorem}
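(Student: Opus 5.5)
The equality splits into two inequalities, and both are essentially available from earlier results; the only work is to check that the hypotheses on $n$ line up.

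For the lower bound, I will invoke Theorem~\ref{thm:odd-power} directly. Its hypotheses are that $k+1=2^{\lceil\log_2(k+1)\rceil}$, which is assumed here, and that $n$ is odd with $n\ge 2k+2$. The latter holds because $n\ge c_k^*\ge 2k+2$. This gives $\eqc(C_n^k)\ge 2+\lceil\log_2(k+1)\rceil$.

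For the upper bound, I will apply Proposition~\ref{prop:quadratic}, which gives $\eqc(C_n^k)\le 2+\lceil\log_2(k+1)\rceil$ whenever $n\ge\max\{2k+2,2k^2\}$. So it suffices to check that $c_k^*\ge 2k^2$. For $k\ge1$ we have $k^2(k+2)\ge 3k^2\ge 2k^2$. Hence $n\ge c_k^*$ implies $n\ge\max\{2k+2,2k^2\}$, and the proposition applies.

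Combining the two inequalities yields $\eqc(C_n^k)=\lceil\log_2(k+1)\rceil+2$. There is no real obstacle: the substantive difficulty, the parity argument with the quadratic form $Q$ over a field of characteristic two, was already handled in Theorem~\ref{thm:odd-power}. The only point needing care is the threshold comparison $c_k^*\ge\max\{2k+2,2k^2\}$, which is immediate as shown above. Together with Theorems~\ref{thm:main-divisible} and~\ref{thm:eq=1}, this also covers the case $(k+1)\mid n$ and completes Theorem~\ref{thm:exact-values}.
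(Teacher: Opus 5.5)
Your proposal is correct and is exactly the paper's proof: the lower bound comes from Theorem~\ref{thm:odd-power} and the upper bound from Proposition~\ref{prop:quadratic}. Your explicit check that $c_k^*\ge\max\{2k+2,2k^2\}$, via $k^2(k+2)\ge 3k^2$, supplies the one hypothesis comparison the paper leaves implicit.
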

\begin{proof}[\bf Proof]
Combining Proposition~\ref{prop:quadratic} and Theorem~\ref{thm:odd-power} gives the result.
\end{proof}
Theorem~\ref{thm:exact-values} follows from Theorems~\ref{thm:main-divisible}, \ref{thm:eq=1} and \ref{thm:eq=2}.
\section{Proof of Theorem~\ref{thm:main-gap}}\label{sec:1.4}
In this section, we prove Theorem~\ref{thm:main-gap}. We first show how a covering of a smaller power of a cycle can be used to cover a larger one. We then combine this construction with Lemma~\ref{lem:embedding} to obtain a sharper upper bound when $n$ is sufficiently large.
\begin{lemma}\label{lem:pair-lift}
If $p\ge1$ and $m\ge2p+2$, then $\eqc(C_{2m}^{2p+1})\le1+\eqc(C_m^p)$.
\end{lemma}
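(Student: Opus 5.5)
The plan is a ``blow-up'' construction: double every vertex of $C_m^p$, lift each equivalence subgraph of an optimal covering, and cover the few remaining edges with one extra matching.

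\textbf{Setup.} Write the vertices of $C_{2m}$ as $u_0,\ldots,u_{2m-1}$ and those of $C_m$ as $w_0,\ldots,w_{m-1}$, with indices modulo $2m$ and $m$ respectively. Define $\pi(u_{2i+a})=w_i$ for $a\in\{0,1\}$. For $x=u_{2i+a}$ and $y=u_{2j+b}$, let $d=d_{C_m}(w_i,w_j)$. The key step is to compare $d_{C_{2m}}(x,y)$ with $2d$.
\begin{itemize}[label=\textbullet]
\item Going the short way around, the distance from $x$ to $y$ is $2d+\varepsilon$ with $\varepsilon\in\{-1,0,1\}$.
\item If $d\ge p+2$, then $d_{C_{2m}}(x,y)\ge 2p+3$, so $xy$ is not an edge. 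Here one uses that the other direction around $C_{2m}$ has length $2(m-d)-\varepsilon\ge 2d-1$, since $d\le m/2$.
\item If $d\le p$, then $d_{C_{2m}}(x,y)\le 2p+1$, so $xy\in E(C_{2m}^{2p+1})$. This includes the case $i=j$.
\item If $d=p+1$, the hypothesis $m\ge 2p+2$ gives $p+1\le m/2$, so $w_j=w_{i\pm(p+1)}$ is determined by a unique short direction when $m>2p+2$ (and by symmetric directions when $m=2p+2$). In this case $xy$ is an edge exactly when the distance is $2p+1$. That happens exactly for the pairs $\{u_{2i+1},u_{2i+2p+2}\}$. When $m=2p+2$, one should check that the two directions give the same set of such pairs.
\end{itemize}

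\textbf{Lifting the covering.} Let $G_1,\ldots,G_t$ be equivalence subgraphs covering $C_m^p$, with $t=\eqc(C_m^p)$, taken spanning with singleton components allowed. For each $s$, let $\widetilde G_s$ have components $\pi^{-1}(K)$ for the components $K$ of $G_s$. By the case $d\le p$, each $\pi^{-1}(K)$ is a clique in $C_{2m}^{2p+1}$, so $\widetilde G_s$ is an equivalence subgraph.

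The lifted subgraphs cover every edge $xy$ with $d\le p$:
\begin{itemize}[label=\textbullet]
\item If $\pi(x)=\pi(y)$, the edge lies inside $\pi^{-1}$ of a single vertex, and that preimage is contained in some component of any $\widetilde G_s$.
\item Otherwise $\pi(x)\pi(y)\in E(C_m^p)$ is covered by some $G_s$, and hence $xy$ is covered by $\widetilde G_s$.
\end{itemize}

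\textbf{The extra subgraph.} The remaining edges are $\{u_{2i+1},u_{2i+2p+2}\}$ for $0\le i<m$. Each odd vertex occurs once, and each even vertex $u_{2j}$ occurs once, paired with $u_{2j-2p-1}$. So these edges form a perfect matching. A matching is an equivalence subgraph, and it gives the one additional subgraph, so $\eqc(C_{2m}^{2p+1})\le 1+t$.

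\textbf{Main obstacle.} The only delicate point is the distance bookkeeping in the case $d=p+1$, especially the boundary case $m=2p+2$. There one must confirm that no further edges appear and that the exceptional pairs really form a matching. I would verify this directly by writing $j-i\equiv\pm(p+1)\pmod m$ and computing both arc lengths in $C_{2m}$.
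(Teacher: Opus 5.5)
Your proposal is correct and uses the paper's construction: you lift each component of an optimal covering of $C_m^p$ to the union of its pairs $\{2i,2i+1\}$ and add one perfect matching. The only difference is how you show the leftover edges form a matching. You identify them explicitly as $\{u_{2i+1},u_{2i+2p+2}\}$, and the $m=2p+2$ check you deferred does go through, since there $\{u_{2i},u_{2i+2p+3}\}=\{u_{2j+1},u_{2j+2p+2}\}$ with $j=i+p+1$. The paper instead avoids the $d=p+1$ bookkeeping by noting that the lifted edges form a $(4p+1)$-regular spanning subgraph of the $(4p+2)$-regular graph $C_{2m}^{2p+1}$, so the complement is automatically a perfect matching.
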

\begin{proof}[\bf Proof]
Let $t=\operatorname{eq}(C_m^p)$, and take a covering of $C_m^p$ by $t$ equivalence subgraphs. For each vertex $i$, consider the pair $\{2i,2i+1\}$ in $C_{2m}^{2p+1}$. In each equivalence subgraph, replace every component by the subgraph induced by the union of the corresponding pairs.

For any $i,j$ with $d_{C_m}(i,j)\le p$ and $a,b\in\{0,1\}$, we have
\begin{equation*}
d_{C_{2m}}(2i+a,2j+b)\le2d_{C_m}(i,j)+1\le2p+1.
\end{equation*}
Thus the resulting sets are cliques in $C_{2m}^{2p+1}$, and their vertex-disjointness is preserved. Hence we obtain $t$ equivalence subgraphs of $C_{2m}^{2p+1}$.

These subgraphs together cover precisely the edges within each pair and all edges joining pairs corresponding to adjacent vertices of $C_m^p$. Since $C_m^p$ is $2p$-regular, the graph formed by these edges is $(4p+1)$-regular, as $1+2(2p)=4p+1$. On the other hand, $C_{2m}^{2p+1}$ is $(4p+2)$-regular. Consequently, the remaining edges form a perfect matching, which can be covered by one additional equivalence subgraph. Therefore,
\begin{equation*}
\operatorname{eq}(C_{2m}^{2p+1})\le1+\operatorname{eq}(C_m^p),
\end{equation*}
as required.
\end{proof}
We now combine Lemma~\ref{lem:pair-lift} with the induced subgraph embedding in Lemma~\ref{lem:embedding} to obtain a more general result.
\begin{lemma}\label{lem:reduction}
Let $n\ge 2k+2$. Then there exist integers $p\ge1$ and $m\ge2p+2$ such that $C_n^k$ is an induced subgraph of $C_{2m}^{2p+1}$ and ${m}/{(p+1)}\ge{n}/{(k+1)}$. In particular,
\begin{equation*}
\operatorname{eq}(C_n^k)\le1+\operatorname{eq}(C_m^p).
\end{equation*}
\end{lemma}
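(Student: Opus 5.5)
The plan is to exhibit an explicit pair $(p,m)$ and check the hypotheses of Lemma~\ref{lem:embedding} with $N=2m$ and $K=2p+1$. The simplest choice is $p=k$ and $m=n$. It looks almost tautological, but the statement only asks for existence, so it suffices. Then $p=k\ge1$, and $m=n\ge 2k+2=2p+2$, as Lemma~\ref{lem:pair-lift} requires. Also $m/(p+1)=n/(k+1)$, so the ratio condition holds with equality.

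Next I verify the hypotheses of Lemma~\ref{lem:embedding} for $N=2n$ and $K=2k+1$:
\begin{itemize}
\item $N=2n\ge n$;
\item $kN=2kn\le (2k+1)n=Kn$;
\item $(k+1)N=(2k+2)n=(K+1)n$.
\end{itemize}
Hence $C_n^k$ is an induced subgraph of $C_{2n}^{2k+1}$. One can also see this directly via the map $v_i\mapsto 2i$: cyclic distances double, and $2d\le 2k+1$ holds if and only if $d\le k$. In general, the inequality $(k+1)N\ge(K+1)n$ is equivalent to $m/(p+1)\ge n/(k+1)$, so the ratio condition in the statement is exactly the second condition of Lemma~\ref{lem:embedding}.

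For the final inequality, Lemma~\ref{lem:monotonicity} gives $\eqc(C_n^k)\le\eqc(C_{2m}^{2p+1})$. Since $p\ge1$ and $m\ge 2p+2$, Lemma~\ref{lem:pair-lift} gives $\eqc(C_{2m}^{2p+1})\le 1+\eqc(C_m^p)$. Combining the two yields $\eqc(C_n^k)\le 1+\eqc(C_m^p)$.

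There is no real obstacle, since every condition is checked by a one-line computation. The only care needed is that the constraint $m\ge 2p+2$ in Lemma~\ref{lem:pair-lift} is satisfied, which is exactly the standing assumption $n\ge 2k+2$. If the later argument needs $p$ substantially smaller than $k$, one would instead take $p=\lfloor k/2\rfloor$ or similar and $m=\lceil n(p+1)/(k+1)\rceil$. In that case the delicate point would be the inequality $2km\le(2p+1)n$, which needs $n$ large relative to $k$; for the lemma as stated, the choice $p=k$, $m=n$ suffices.
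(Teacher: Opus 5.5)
Your proof is correct for the lemma as literally worded. With $p=k$ and $m=n$ every hypothesis checks out, and your direct embedding $v_i\mapsto v_{2i}$ is a good sanity check. But the conclusion then collapses to the vacuous $\eqc(C_n^k)\le 1+\eqc(C_n^k)$, so it is a degenerate route rather than the paper's.

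The paper instead takes $p+1=\lceil \frac{k+1}{2}+\frac{k(k+1)}{n}\rceil$ and $m=\lceil n(p+1)/(k+1)\rceil$, which roughly halves the power. The correction term $\frac{k(k+1)}{n}$ is exactly what makes
\begin{equation*}
2km<\frac{2kn(p+1)}{k+1}+2k\le n(2p+1)
\end{equation*}
hold, since the second inequality is equivalent to $p+1\ge\frac{k+1}{2}+\frac{k(k+1)}{n}$. So, contrary to your closing remark, the embedding never needs $n$ large. The formula is valid for every $n\ge 2k+2$; at $n=2k+2$ it reduces exactly to your choice $p=k$, $m=n$. Largeness of $n$ only controls how close $p$ comes to $k/2$, via $p+1<\bigl(\frac12+\frac{k}{n}\bigr)(k+1)+1$.

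Your suggested fallback $p=\lfloor k/2\rfloor$ would in fact fail for odd $k$ and odd $n$. Then $2p+1=k$, and since $C_n^k$ and $C_{2m}^k$ are both $2k$-regular and $C_{2m}^k$ is connected, $C_n^k$ can be an induced subgraph of $C_{2m}^k$ only if $2m=n$, which is impossible for odd $n$.

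What the paper's choice buys is Proposition~\ref{prop:logarithmic-order}. That proof iterates this explicit choice of $(p,m)$, not merely the lemma's existential statement, to bring the power down to at most $6$ in $s-2$ steps. Your choice proves the statement as written but supports none of that iteration. For the lemma to do its job, you should state and prove it with the explicit upper bound on $p$.
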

\begin{proof}[\bf Proof]
Choose integers $p$ and $m$ such that
\begin{equation}\label{eq:p+1,m}
p+1=\left\lceil\frac{k+1}{2}+\frac{k(k+1)}{n}\right\rceil,
\qquad
m=\left\lceil\frac{n(p+1)}{k+1}\right\rceil.
\end{equation}
Clearly, $p\ge1$. Since $n\ge2k+2$, we have
\begin{equation*}
\frac{m}{p+1}\ge\frac{n}{k+1}\ge2,
\qquad
2m\ge\frac{2n(p+1)}{k+1}\ge n.
\end{equation*}
Thus $m\ge2p+2$. Moreover, by \eqref{eq:p+1,m} we have
\begin{equation*}
	2km<\frac{2nk(p+1)}{k+1}+2k\le n(2p+1),
\end{equation*}
and $2m(k+1)\ge2n(p+1)$. By Lemma~\ref{lem:embedding}, $C_n^k$ is an induced subgraph of $C_{2m}^{2p+1}$. Hence, by Lemmas~\ref{lem:monotonicity} and~\ref{lem:pair-lift}, we have
\begin{equation*}
\operatorname{eq}(C_n^k)\le\operatorname{eq}(C_{2m}^{2p+1})\le1+\operatorname{eq}(C_m^p),
\end{equation*}
as required.
\end{proof}
We now apply Lemma~\ref{lem:reduction} repeatedly to obtain a sharper upper bound for sufficiently large $n$.
\begin{proposition}\label{prop:logarithmic-order}
If $n\ge(k+1)\max\{41,5\ceil{\log_2(k+1)}\}$, then $\eqc(C_n^k)\le2+\ceil{\log_2(k+1)}$.
\end{proposition}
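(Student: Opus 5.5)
The plan is to iterate Lemma~\ref{lem:reduction} until the power becomes one of the form $2^s-1$. There Theorem~\ref{thm:main-divisible}, or the ratio condition needed for an embedding as in Theorem~\ref{thm:eq=1}, gives the value $1+\lceil\log_2(\cdot)\rceil$ exactly. Write $L=\lceil\log_2(k+1)\rceil$ and start from $(n_0,k_0)=(n,k)$. Lemma~\ref{lem:reduction} produces $(n_1,k_1)=(m,p)$ with $\eqc(C_{n_0}^{k_0})\le1+\eqc(C_{n_1}^{k_1})$ and $n_1/(k_1+1)\ge n_0/(k_0+1)$. The ratio $\rho=n/(k+1)\ge\max\{41,5L\}$ therefore never decreases, and the hypothesis $n_i\ge2k_i+2$ needed to reapply the lemma stays valid. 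From \eqref{eq:p+1,m} we have $k_1+1=\lceil (k_0+1)/2+k_0(k_0+1)/n_0\rceil\le\lceil (k_0+1)(1/2+1/\rho)\rceil$, so $k+1$ roughly halves at each step, with a multiplicative error of $(1+2/\rho)$ and an additive error of at most $1$ from the ceiling.

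The process stops after $j$ steps, at the first time $k_j+1$ is small: ideally $k_j=1$ or $k_j+1\le 2^{L-j}$. Then $\eqc(C_n^k)\le j+\eqc(C_{n_j}^{k_j})$. If $k_j+1\le 2^{L-j}$, I would embed $C_{n_j}^{k_j}$ into $C_{N}^{2^{L-j}-1}$ via Lemma~\ref{lem:embedding}, using the same computation as in Theorem~\ref{thm:eq=1} or Proposition~\ref{prop:quadratic}. This requires $\rho$ to be large relative to $k_j$; if $k_j$ is bounded this is automatic, and otherwise it holds because $\rho\ge 5L$ is at least of order $k_j$ only when $k_j$ is small. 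The embedding gives $\eqc\le 1+(L-j)$, so the total is $1+L$, or $2+L$ if one extra unit is lost along the way. The cleaner route is to run the recursion all the way down to $k_j\in\{1,2,3\}$. At that point I would use $\eqc(C_{n'}^{k'})\le 2+\lceil\log_2(k'+1)\rceil$, which Proposition~\ref{prop:quadratic} gives once $n'\ge 2k'^2$; since $\rho\ge41$, this needs at most $n'\ge 18$, which holds. The target is then $j+\lceil\log_2(k_j+1)\rceil\le L$.

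The main obstacle is bookkeeping the ceiling errors so that the number of halvings $j$ satisfies $j+\lceil\log_2(k_j+1)\rceil\le L$. I would track the quantity $(k_i+1)/2^{L-i}$, which starts at most $1$. One step maps $x=k+1$ to at most $x/2+x/(2\rho')+1$ with $\rho'=\rho/2$, roughly. Since $k_i+1$ is an integer and $2^{L-i}$ is a power of two, the ceiling only hurts when $k_i+1$ is already near $2^{L-i}$. There the bound $k_1+1\le\lceil (k_0+1)/2+(k_0+1)/\rho\rceil$ works provided $(k_0+1)/\rho<1/2$ in the critical case. This is where $\rho\ge 5L$ enters. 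Cumulative drift of about $1/\rho$ per step over $L$ steps stays below a constant when $\rho\ge 5L$, and $\rho\ge41$ handles the small cases. I expect to need a careful induction of the form $k_i+1\le 2^{L-i}+\varepsilon_i$ with $\varepsilon_i<1$, concluding $k_i+1\le 2^{L-i}$. If this sharp form fails, I would allow one unit of slack and absorb it into the ``$2+$'' of Proposition~\ref{prop:quadratic} at the final stage. Concretely: at the terminal stage apply Proposition~\ref{prop:quadratic}, giving $\eqc(C_{n_j}^{k_j})\le 2+\lceil\log_2(k_j+1)\rceil$, and prove by induction that $j+\lceil\log_2(k_j+1)\rceil\le L$ using $\rho\ge\max\{41,5L\}$.
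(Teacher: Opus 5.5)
\textbf{Gap in the final plan.} Your concrete plan fails by exactly one unit, and the induction you propose is false. The plan is to run the recursion down to $k_j\in\{1,2,3\}$, finish with Proposition~\ref{prop:quadratic}, and prove $j+\lceil\log_2(k_j+1)\rceil\le L$ by induction.

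Since $k_{i+1}+1\ge (k_i+1)/2$, the quantity $i+\lceil\log_2(k_i+1)\rceil$ never decreases along the recursion. When $k+1=2^L$ it already equals $L+1$ at $i=1$, because $k_1+1=\lceil 2^{L-1}+k(k+1)/n\rceil\ge 2^{L-1}+1$. So your route gives only $\eqc(C_n^k)\le L+3$; for instance take $k=31$ and $1312\le n<2k^2$. The escape $j=0$ needs $n\ge 2k^2$, which the hypothesis $n\ge(k+1)\max\{41,5L\}$ does not provide. Your fallback, ``absorb one unit of slack into the $2+$ of Proposition~\ref{prop:quadratic},'' double counts, because that extra unit is already the cost of the terminal step. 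The ``ideal'' total $1+L$ of your first route is also impossible in general, by Theorem~\ref{thm:odd-power}.

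\textbf{What is needed.} The right budget is that the recursion may lose one unit. The terminal step must then cost only $1+\lceil\log_2(\cdot)\rceil$, and only Theorem~\ref{thm:main-divisible} combined with Lemma~\ref{lem:embedding} supplies that. The paper proceeds as follows.
\begin{enumerate}
\item It runs exactly $s-2$ steps, with $s=L$, using $k_{i+1}+1<(\tfrac12+\tfrac1{5s})(k_i+1)+1$ and $k+1\le 2^s$.
\item The hypothesis $\rho\ge 5s$ keeps the multiplicative drift $(1+\tfrac2{5s})^{s-2}<e^{2/5}$ and the additive drift bounded. This gives $k_{s-2}+1<8$.
\item It embeds $C_{n_{s-2}}^{k_{s-2}}$ into $C_{8j}^7$ with $j=\lceil n_{s-2}/(k_{s-2}+1)\rceil$. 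Here $\rho\ge 41$ secures $8jk_{s-2}\le 7n_{s-2}$, so the total is $(s-2)+4$.
\end{enumerate}

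Your first route leaves two essential features unaddressed, and your justification of its embedding step (``$\rho\ge5L$ is at least of order $k_j$ only when $k_j$ is small'') does not resolve either.
\begin{itemize}
\item The terminal power must be bounded. Embedding into $C_N^{2^r-1}$ with $2^r\mid N$ needs roughly $\rho(2^r-1-k_j)\ge 2^rk_j$, which is hopeless for large $k_j$ when $\rho=\Theta(\log k)$.
\item The bound $k_{s-2}+1<8$ must be strict. If $k_{s-2}=7$, the embedding forces $N=n_{s-2}$ and so requires $8\mid n_{s-2}$.
\end{itemize}
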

\begin{proof}[\bf Proof]
Let $s=\lceil\log_2(k+1)\rceil$. If $s=1$, then $k=1$. Every even cycle is the union of two matchings, while every odd cycle can be covered by three matchings. Thus the result holds for
$s=1$. Assume $s\ge2$.

Starting with $(n_0,k_0)=(n,k)$, apply Lemma~\ref{lem:reduction} $s-2$ times. For $0\le i<s-2$, choosing 
\begin{equation}\label{eq:k+1,n+1}
k_{i+1}+1=\left\lceil\frac{k_i+1}{2}+\frac{k_i(k_i+1)}{n_i}\right\rceil,\qquad n_{i+1}=\left\lceil\frac{n_i(k_{i+1}+1)}{k_i+1}\right\rceil.
\end{equation}
By Lemma~\ref{lem:reduction}, the ratios satisfy
\begin{equation}\label{eq:n>max 42,5s}
\frac{n_i}{k_i+1}\ge\frac{n}{k+1}\ge\max\{41,5s\}
\end{equation}
at every step. Hence the lemma remains applicable, and
\begin{equation}\label{eq:iterative-bound}
	\operatorname{eq}(C_n^k)\le(s-2)+\operatorname{eq}(C_{n_{s-2}}^{k_{s-2}}).
\end{equation}
Moreover, by \eqref{eq:k+1,n+1} and \eqref{eq:n>max 42,5s}, we have
\begin{equation*}
	k_{i+1}+1<\frac{k_i+1}{2}+\frac{k_i(k_i+1)}{n_i}+1<\left(\frac12+\frac1{5s}\right)(k_i+1)+1.
\end{equation*}
Since $k+1\le2^s$, for $s\ge3$, repeated application gives
\begin{align}\label{eq:ks-2+1<}
k_{s-2}+1
&<\left(\frac12+\frac1{5s}\right)^{s-2}(k+1)+\sum_{i=0}^{s-3}\left(\frac12+\frac1{5s}\right)^i\notag
\\&<2^s\left(\frac12+\frac1{5s}\right)^{s-2}+\frac{1}{\frac12-\frac1{5s}}\notag
\\&=4\left(1+\frac2{5s}\right)^{s-2}+\frac{10s}{5s-2}.
\end{align}
For $s=2$, this bound also follows directly from $k+1\le4$. Since $\left(1+\frac2{5s}\right)^s
<e^{2/5}<\frac32$, we have
\begin{equation}\label{eq:4s-2<30s}
4\left(1+\frac2{5s}\right)^{s-2}<\frac6{(1+2/(5s))^2}\le\frac{30s}{5s+4}.
\end{equation}
Combining \eqref{eq:ks-2+1<} and \eqref{eq:4s-2<30s}, we obtain
\begin{equation*}
k_{s-2}+1<\frac{30s}{5s+4}+\frac{10s}{5s-2}=8-\frac{24}{5s+4}+\frac4{5s-2}<8,
\end{equation*}
where the last inequality follows from $s\ge2$. Since $k_{s-2}$ is an integer, we have $1\le k_{s-2}\le6$. 

It remains to show that $\operatorname{eq}(C_{n_{s-2}}^{k_{s-2}})\le4$ whenever $1\le k_{s-2}\le6$ and
$n_{s-2}\ge41(k_{s-2}+1)$. Let $j=\left\lceil {n_{s-2}}/{(k_{s-2}+1)}\right\rceil$. If $j=41$, then $n_{s-2}=41(k_{s-2}+1)$, and hence $7n_{s-2}-8jk_{s-2}=41(7-k_{s-2})>0$. If $j\ge42$, then $n_{s-2}\ge(j-1)(k_{s-2}+1)+1$, so $7n_{s-2}-8jk_{s-2}\ge j(7-k_{s-2})-7k_{s-2}\ge49(6-k_{s-2})\ge0$.  Moreover, $n_{s-2}\le j(k_{s-2}+1)<8j$ and $8j(k_{s-2}+1)\ge8n_{s-2}$. By Lemma~\ref{lem:embedding}, $C_{n_{s-2}}^{k_{s-2}}$ is an induced
subgraph of $C_{8j}^7$. Hence, by Lemma~\ref{lem:monotonicity} and Theorem~\ref{thm:main-divisible}, we have
\begin{equation}\label{eq:eqC<=4}
\operatorname{eq}(C_{n_{s-2}}^{k_{s-2}})
\le\operatorname{eq}(C_{8j}^7)=4.
\end{equation}
Substituting \eqref{eq:eqC<=4} into \eqref{eq:iterative-bound}, we obtain
\begin{equation*}
	\operatorname{eq}(C_n^k)\le(s-2)+4=2+\lceil\log_2(k+1)\rceil,
\end{equation*}
as required.
\end{proof}
Combining Lemma~\ref{lem:lower} and Proposition~\ref{prop:logarithmic-order}, this proves Theorem~\ref{thm:main-gap}.

\section*{Declaration on the use of AI}
The author used ChatGPT to assist in discussing proof strategies, checking proofs, and improving exposition.

\end{document}